\documentclass[11pt]{article}
\usepackage[T1]{fontenc}
\usepackage{lmodern}
\usepackage[a4paper,left=23mm,right=23mm,top=22mm,bottom=24mm]{geometry}
\usepackage{amsmath,amssymb,amsthm,mathtools,bm}
\usepackage{booktabs,array,tabularx,longtable,enumitem}
\usepackage{algorithm}
\usepackage{algpseudocode}
\floatname{algorithm}{Algorithm}
\algrenewcommand\algorithmicrequire{\textbf{Input:}}
\algrenewcommand\algorithmicensure{\textbf{Output:}}
\usepackage{microtype}
\usepackage[hidelinks]{hyperref}
\usepackage{bookmark}
\hypersetup{
  pdftitle={Exact Values, Extremal Classifications, and Sum-of-Squares Reductions for Second-Order Zarankiewicz Numbers},
  pdfauthor={Yi Xu and Xihong Yan}
}
\setlist{nosep,leftmargin=2em}

\theoremstyle{plain}
\newtheorem{theorem}{Theorem}[section]
\newtheorem{lemma}[theorem]{Lemma}
\newtheorem{proposition}[theorem]{Proposition}

\newtheorem{conjecture}[theorem]{Conjecture}
\theoremstyle{definition}
\newtheorem{definition}[theorem]{Definition}
\newtheorem{example}[theorem]{Example}
\theoremstyle{remark}

\newtheorem{problem}[theorem]{Problem}

\newcommand{\SOS}{\operatorname{SOS}}
\newcommand{\BSR}{\operatorname{BSR}}
\newcommand{\rank}{\operatorname{rank}}
\newcommand{\tr}{\operatorname{tr}}
\newcommand{\supp}{\operatorname{supp}}
\newcommand{\zRL}{z_{\mathrm{RL}}}
\newcommand{\zSL}{z_{\mathrm{SL}}}
\newcommand{\zwL}{z_{\mathrm{wL}}}
\newcommand{\one}{\bullet}
\newcommand{\hole}{\circ}
\newcommand{\Aut}{\operatorname{Aut}}
\newcommand{\norm}[1]{\lVert#1\rVert}
\newcommand{\abs}[1]{\lvert#1\rvert}

\title{Exact Values, Extremal Classifications, and Sum-of-Squares Reductions for Second-Order Zarankiewicz Numbers}
\author{%
  Yi Xu$^{1}$ \qquad Xihong Yan$^{2,*}$\\[0.8em]
  \small $^{1}$Mathematics Department, Southeast University,\\[-0.1em]
  \small 2 Sipailou, Nanjing, Jiangsu Province 210096, P. R. China\\[0.35em]
  \small $^{2}$Key Laboratory of Engineering and Computational Science,\\[-0.1em]
  \small Taiyuan Normal University, Jinzhong 030619, Shanxi, China\\[0.55em]
  \small $^{*}$Corresponding author. E-mail: \href{mailto:yanxihong@tynu.edu.cn}{yanxihong@tynu.edu.cn}\\[-0.1em]
  \small Contributing author. E-mail: \href{mailto:yi.xu1983@hotmail.com}{yi.xu1983@hotmail.com}
}
\date{}

\begin{document}
\maketitle

\begin{abstract}
There is a natural connection between the sum-of-squares (SOS) rank problem for biquadratic forms and the Zarankiewicz extremal problem for $C_4$-free bipartite graphs. The classical Zarankiewicz number $z(m,n)$ controls the bipartite skeleton associated with monomial squares. Allowing two cells to form a single bilinear square leads to augmented Zarankiewicz configurations and to the second-order Zarankiewicz number $z_2(m,n)$. Unlike $\zRL$ and $\zSL$, which are defined through specific recursive sufficient conditions, $z_2$ maximizes over all irreducible displayed SOS decompositions without imposing $(RW3^+)$. Hence, to prove $z_2(m,n)\le R$, one must prove that every simple limited configuration with more displayed squares is reducible; failure of a sufficient condition cannot serve as a counterargument.

We study this distinction for several of the smallest unresolved parameter values. We prove
\[
z_2(4,4)=10,\qquad z_2(7,4)=19,\qquad z_2(8,4)=21,\qquad z_2(5,5)=17,
\]
and obtain $z_2=\zSL=\zRL$ in all of these cases. We also show that the extremal irreducible $6\times4$ configurations with 16 displayed squares form a single isomorphism class under row and column relabeling, whereas the extremal irreducible $7\times4$ configurations with 19 displayed squares form exactly three isomorphism classes. The four-column results form a structural chain: classify lower-order extremal configurations first, then use hereditary irreducibility under deletion of complete squares to constrain the next order. Whenever a finite exhaustive step is needed, we describe candidate pruning, a necessary compatibility graph, clique enumeration, and orbit reduction, and we attach a verifiable reducibility or irreducibility proof to each remaining orbit. For $5\times5$, there are two ordinary extremal skeletons; finite exclusion leaves only two highly symmetric 18-square candidates. They define the same ten-square polynomial, which admits an explicit nine-square representation, yielding the upper bound for $z_2(5,5)$. We conclude with deletion inequalities, global inheritance of local shorter SOS representations, and possible extensions of low-rank Gram methods.
\end{abstract}

\noindent\textbf{Keywords:} Zarankiewicz number; biquadratic form; sum-of-squares rank; irreducible SOS; extremal bipartite graph; Gram matrix; computer-assisted proof

\medskip
\noindent\textbf{Funding.} This author's work was supported by Natural Science Foundation of China (No. 12671367). This work was partially supported by the Jiangsu Provincial Scientific Research Center of Applied Mathematics [Grant No. BK20233002].

\noindent\textbf{MSC 2020:} 05C35; 15A69; 90C22

\section{Introduction}

Sum-of-squares representations of biquadratic forms have both algebraic and combinatorial structure. Let
\[
P(x,y)=\sum_{i,k=1}^{m}\sum_{j,\ell=1}^{n}a_{ijk\ell}x_i x_k y_jy_\ell
\]
be an $m\times n$ biquadratic form. If
\[
P(x,y)=f_1(x,y)^2+\cdots+f_r(x,y)^2,
\]
where the $f_t$ are real bilinear forms, then $P$ is an SOS biquadratic form. The minimum number of squares in such a representation is denoted by $\SOS(P)$. Maximizing over all $m\times n$ SOS biquadratic forms gives the biquadratic SOS rank $\BSR(m,n)$. Understanding $\BSR(m,n)$, and identifying structures that force a given SOS representation to be non-shortenable, are basic problems in this area \cite{CLR,QCXrank}.

This algebraic problem naturally leads to the classical Zarankiewicz problem. Associate the cell $(i,j)$ with the bilinear monomial $x_i y_j$, and for a bipartite graph $G_1=([m],[n],E_1)$ consider
\[
\sum_{(i,j)\in E_1}(x_i y_j)^2.
\]
If the four corners determined by two rows and two columns all lie in $E_1$, write the four corresponding monomials as $a,b,c,d$. Then $ad=bc$, and hence
\[
a^2+b^2+c^2+d^2=(a+d)^2+(b-c)^2.
\]
Thus a single $C_4$ immediately compresses four displayed squares into two. In other words, irreducibility of the displayed decomposition forces the ordinary one-edge graph to be $C_4$-free, so the classical Zarankiewicz number $z(m,n)$ becomes the first combinatorial upper bound for the SOS problem. Classical Zarankiewicz theory begins with the double-counting method of K\H{o}v\'ari--S\'os--Tur\'an and includes exact formulas and asymptotic results in several fixed-width and unbalanced regimes \cite{KST,Culik,Reiman}.

Restricting to monomial squares is still too conservative. If two distinct cells $(i,j)$ and $(k,\ell)$ form the square
\[
(x_i y_j+x_k y_\ell)^2,
\]
then one square may occupy two cells. This observation motivates the augmented bipartite graph framework of Qi, Cui, Xu and collaborators: one-edges represent monomial squares, two-edges represent squares of two-term bilinear forms, and one asks how many two-edges can be added on top of an extremal one-edge skeleton $|E_1|=z(m,n)$ while preserving irreducibility \cite{QCXaug,QCXweak}. Subsequent work introduced several checkable sufficient conditions---weak, recursive-line, and signed---leading to the parameters $\zwL(m,n)$, $\zRL(m,n)$, and $\zSL(m,n)$ \cite{LQ,QCL}.

Among these parameters, $z_2(m,n)$ plays a different role. The quantities $\zRL$ and $\zSL$ are defined by particular sufficient conditions: satisfying the condition certifies that the displayed SOS is irreducible. In contrast, $z_2$ maximizes over all irreducible limited configurations, whether or not a prescribed rule recognizes them. Therefore
\[
 z_2(m,n)\ge \zSL(m,n)\ge \zRL(m,n),
\]
but failure of $(RW3^+)$ does not imply reducibility. This distinction makes upper bounds for $z_2$ fundamentally harder than finding a large construction for $\zRL$: to prove $z_2(m,n)\le R$, every larger candidate must be ruled out by a genuine reducibility proof, such as an explicit shorter SOS representation, a nonzero product identity inducing a Gram-rank reduction, or a positive semidefinite Gram representation of strictly smaller rank.

We focus on the gap between checkable sufficient conditions and irreducibility without an imposed $(RW3^+)$ condition. In three columns it is known that
\[
z_2(m,3)=\zRL(m,3)=2m\qquad(m\ge3),
\]
For four columns the ordinary Zarankiewicz skeleton is already highly rigid when $m\ge6$: $z(m,4)=m+6$, and every extremal one-edge graph consists of six degree-two rows and $m-6$ degree-one rows. Thus four columns provide the first nontrivial testing ground in which the ordinary skeleton is explicit but the relation between $z_2$ and $\zRL$ remains delicate. The $5\times5$ case, by contrast, is one of the smallest balanced square cases in which the ordinary extremal skeleton is no longer unique, and therefore tests whether the methods developed for four columns persist beyond that rigid setting.

We use two complementary methods. The first is structural recursion: completely classify extremal irreducible configurations at a smaller size, use hereditary irreducibility under deletion of complete squares to constrain the next size, and then eliminate impossible extensions by strip bounds, product identities, and explicit shorter SOS representations. The second is local algebraic reduction: first reduce the finite candidate set to a small collection of highly symmetric exceptions, and then construct a lower-rank positive semidefinite Gram representation or an explicit shorter SOS. Computation is used only for finite enumeration, organization into isomorphism orbits, and exact algebraic verification. Every discarded candidate must have an explicit mathematical reason; a failed search or failure of a recursive sufficient condition is never used as a proof of reducibility.

Our main results are as follows:
\begin{enumerate}
\item We prove $z_2(4,4)=\zSL(4,4)=\zRL(4,4)=10$ and show that the extremal irreducible 10-square configuration has a unique isomorphism class;
\item Building on the known value $z_2(6,4)=16$, we classify all extremal irreducible $6\times4$ configurations with 16 displayed squares and obtain a single isomorphism class;
\item We use the six-row classification to prove $z_2(7,4)=19$, and then completely classify the 19-square extremal configurations, obtaining exactly three isomorphism classes;
\item Using the exact seven-row value and the three extremal templates, together with finite enumeration and exact Gram-rank reduction, we prove $z_2(8,4)=21$;
\item For the two ordinary extremal skeletons on $5\times5$, finite exclusion leaves two highly symmetric 18-square candidates. They define the same ten-square polynomial, which has an explicit nine-square representation, and hence $z_2(5,5)=17$;
\item We derive deletion inequalities and a global inheritance principle for local shorter SOS representations, and discuss their significance for general $m\times4$ and fixed-width problems.
\end{enumerate}

The exact four-column formula for $m\ge15$,
\[
z_2(m,4)=\zRL(m,4)=\left\lfloor\frac{5m+6}{2}\right\rfloor
\]
was established by Chen--Chen \cite{CC}; we do not claim it here as a new result. Our emphasis is on closing several small-size upper bounds for $z_2$, classifying extremal configurations, and developing structural methods that can be continued toward the remaining finite cases.

The paper is organized according to proof dependencies. Section~\ref{sec:prelim} introduces the notation, limited configurations, irreducibility, three reduction lemmas, and the finite exclusion procedure for a fixed ordinary skeleton. Section~\ref{sec:44} gives a complete treatment of $4\times4$ and shows how the equality $z_2(4,4)=10$ follows from a lower-bound construction and an upper-bound exclusion. Section~\ref{sec:fourchain} first classifies the 16-square extremal $6\times4$ configurations, then proves $z_2(7,4)=19$ and classifies all 19-square irreducible configurations; that classification then becomes an input to the proof of $z_2(8,4)=21$. Section~\ref{sec:55} treats $5\times5$ via a different pattern: finite exclusion followed by Gram compression. The finite computations follow Algorithm~\ref{alg:finite} and the five types of reducibility proofs described in Section~\ref{subsec:certs}; the tables record only the finite ranges that must be exhausted and the sizes after pruning, while each accepted conclusion remains backed by a verifiable reducibility or irreducibility proof.

\section{Preliminaries, notation, and basic tools}\label{sec:prelim}

\subsection{Biquadratic forms, SOS rank, and Gram representations}

Let $x=(x_1,\ldots,x_m)$ and $y=(y_1,\ldots,y_n)$. An $m\times n$ biquadratic form is
\[
P(x,y)=\sum_{i,k=1}^{m}\sum_{j,\ell=1}^{n}a_{ijk\ell}x_i x_k y_jy_\ell.
\]
If there exist real bilinear forms $f_1,\ldots,f_r$ such that
\[
P=\sum_{t=1}^{r}f_t^2,
\]
then $P$ is called an SOS biquadratic form, and we define
\[
\SOS(P)=\min\left\{r:P=\sum_{t=1}^{r}f_t^2\right\}.
\]
We further define
\[
\BSR(m,n)=\max\{\SOS(P):P\text{ is an }m\times n\text{ SOS biquadratic form}\}.
\]

Arrange all monomials $u_{ij}=x_i y_j$ into a column vector $u$. If there exists a symmetric positive semidefinite matrix $Q\succeq0$ such that
\[
P=u^{\mathsf T}Qu,
\]
then $Q$ is a Gram matrix of $P$. If $\rank Q=r$, a factorization $Q=V^{\mathsf T}V$ yields a representation by $r$ bilinear squares, and hence
\[
\SOS(P)\le \rank Q.
\]
All low-rank Gram arguments below are applications of this elementary fact.

\subsection{Augmented bipartite configurations}

Let $[m]=\{1,\ldots,m\}$, and identify the cell $(i,j)\in[m]\times[n]$ with the monomial $u_{ij}=x_i y_j$. In specific finite configurations we sometimes relabel rows and columns as $0,1,\ldots$ or $A,B,\ldots$ to exploit symmetry. The corresponding symbols $u_{ij}$, $A3$, and so on refer to the relabeled cell monomials. For example, $A3$ denotes the monomial in row $A$ and column~3.

\begin{definition}[One-edges and two-edges]
A \emph{one-edge} is a single cell $(i,j)$. A \emph{two-edge} is an unordered pair of two distinct cells
\[
e=\{(i,j),(k,\ell)\}.
\]
If $i=k$, $e$ is row-degenerate; if $j=\ell$, it is column-degenerate; and if $i\ne k$ and $j\ne\ell$, it is nondegenerate. If both diagonals of a genuine rectangle are selected as two-edges, they are called complementary two-edges.
\end{definition}

For a one-edge $e=(i,j)$ set $\supp(e)=\{(i,j)\}$; for a two-edge, $\supp(e)$ is the set of its two cells. We write an augmented configuration as
\[
G=([m],[n],E_1\cup E_2),
\]
where $E_1$ is the set of one-edges and $E_2$ is the set of two-edges.

\begin{definition}[Simplicity]
The configuration $G$ satisfies the simplicity condition $(S)$ if the supports of distinct selected edges are disjoint.
\end{definition}

The displayed SOS associated with $G$ is
\begin{equation}\label{eq:PG}
P_G(x,y)=\sum_{(i,j)\in E_1}u_{ij}^2+
\sum_{\{(i,j),(k,\ell)\}\in E_2}(u_{ij}+u_{k\ell})^2.
\end{equation}
Write
\[
R(G)=|E_1|+|E_2|
\]
for the number of displayed squares, and let $H(G)$ be the number of cells not occupied by any selected edge. A two-edge occupies two cells but contributes only one square. Simplicity gives
\begin{equation}\label{eq:cellcount}
|E_1|+2|E_2|+H=mn,
\qquad
2R(G)=mn+|E_1|-H.
\end{equation}

\begin{definition}[Irreducibility of the displayed decomposition]
If $\SOS(P_G)=R(G)$, then the displayed decomposition in \eqref{eq:PG} is called irreducible; in that case we also call the configuration $G$ irreducible.
\end{definition}

In grid diagrams, $\one$ denotes a one-edge, $\hole$ denotes a hole, and two occurrences of the same letter represent one two-edge.

\begin{definition}[Ordinary extremal skeleton and free cells]\label{def:skeleton}
Fix $m,n$. If a $C_4$-free $m\times n$ bipartite graph has $z(m,n)$ edges, we call its edge set $E_1$ an ordinary extremal skeleton. Cells not in $E_1$ are called free cells relative to this skeleton. Two-edges in a limited configuration may use only free cells.
\end{definition}

\begin{definition}[Row-column isomorphism]\label{def:iso}
Two augmented configurations are row-column isomorphic if a permutation of the rows and a permutation of the columns map the one-edge set and the two-edge set of one configuration to those of the other. Every isomorphism class below is understood in this sense.
\end{definition}

\subsection{Zarankiewicz numbers and second-order parameters}

The classical Zarankiewicz number $z(m,n)$ is the maximum number of edges in a $C_4$-free $m\times n$ bipartite graph.

\begin{definition}[Limited configuration]
If $G$ satisfies simplicity $(S)$, the one-edge graph $E_1$ is $C_4$-free, and
\[
|E_1|=z(m,n),
\]
then $G$ is called a \emph{limited} augmented configuration. The term ``limited configuration'' will always have this meaning.
\end{definition}

\begin{definition}[Second-order Zarankiewicz number]
Define
\[
z_2(m,n)=\max\left\{R(G):
\begin{array}{l}
G\text{ is a simple limited configuration, and}\\
\text{the displayed decomposition of }P_G\text{ is irreducible}
\end{array}
\right\}.
\]
\end{definition}

To obtain checkable sufficient conditions for irreducibility, L\"ofberg--Qi \cite{LQ} introduced the strengthened recursive rectangle condition $(RW3^+)$ and the signed condition $(RW3^{\pm})$. The parameter $\zRL(m,n)$ is the maximum number of displayed squares among simple limited configurations satisfying $(RW3^+)$; $\zSL(m,n)$ is defined analogously using the signed condition $(RW3^{\pm})$; and the weak parameter $\zwL(m,n)$ is defined by an earlier sufficient condition. We only use the established hierarchy
\begin{equation}\label{eq:hierarchy}
\BSR(m,n)\ge z_2(m,n)\ge \zSL(m,n)\ge \zRL(m,n)\ge \zwL(m,n)\ge z(m,n).
\end{equation}

We next record the form of $(RW3^+)$ used in this paper. Consider any SOS representation
\[
P_G=\sum_{t=1}^{s}g_t^2,
\]
and let $v_p\in\mathbb R^s$ be the coefficient vector of the cell monomial corresponding to $p$ across the bilinear forms $g_t$. Let $\delta(p,q)=1$ if and only if $\{p,q\}\in E_2$, and let it be zero otherwise. Coefficient comparison gives
\begin{align}
\norm{v_p}^2&=1 &&(p\text{ occupied}),\label{eq:norm}\\
v_p&=0 &&(p\text{ unoccupied}),\label{eq:hole}\\
\langle v_p,v_q\rangle&=\delta(p,q) &&(p,q\text{ in a common row or column}),\label{eq:line}\\
\langle v_p,v_q\rangle+\langle v_r,v_s\rangle
&=\delta(p,q)+\delta(r,s) &&(\{p,q\},\{r,s\}\text{ are the two diagonals of one rectangle}).\label{eq:rect}
\end{align}
These equations induce an identification relation $\sim$ and a certified orthogonality relation $\perp_R$. We use the following four closure rules.

\begin{definition}[Generating rules for $(RW3^+)$]\label{def:rw3rules}
Fix a simple configuration $G$, and let the occupied-cell set be
\[
\Omega=E_1\cup\bigcup_{e\in E_2}e.
\]
For distinct cells $p,q$, define the prescribed value
\[
\delta(p,q)=\begin{cases}
1,&\{p,q\}\in E_2,\\
0,&\text{otherwise}.
\end{cases}
\]
On $\Omega$, construct the least equivalence relation $\sim$ and the least symmetric orthogonality relation $\perp_R$ closed under the following finite rules.
\begin{enumerate}
\item \textbf{Line rule.} If occupied cells $p,q\in\Omega$ lie in a common row or column, certify their prescribed value from \eqref{eq:line}: if $\delta(p,q)=1$, add $p\sim q$; if $\delta(p,q)=0$, add $p\perp_R q$.
\item \textbf{Saturation rule.} If
\[
p\sim p',\qquad q\sim q',\qquad p'\perp_R q',
\]
then add $p\perp_R q$. Thus a certified orthogonality relation may be transported along identified classes, but this rule does not assign a value to an otherwise undetermined inner product.
\item \textbf{Rectangle transfer rule.} Let $\{p,q\}$ and $\{r,s\}$ be the two diagonals of a genuine rectangle. Suppose the companion diagonal $\{r,s\}$ has already been certified to take its prescribed value: if $\delta(r,s)=0$, then at least one of $r,s$ is unoccupied, or $r,s\in\Omega$ and $r\perp_R s$; if $\delta(r,s)=1$, then $r,s\in\Omega$ and $r\sim s$. Equation~\eqref{eq:rect} then certifies the prescribed value on the target diagonal. If $p,q\in\Omega$ and $\delta(p,q)=0$, add $p\perp_R q$; if $\delta(p,q)=1$, then necessarily $p,q\in\Omega$, and we add $p\sim q$.
\item \textbf{Complementary-pair rule.} If both diagonals $\{p,q\}$ and $\{r,s\}$ of a genuine rectangle are selected two-edges, then \eqref{eq:rect} gives
\[
\langle v_p,v_q\rangle+\langle v_r,v_s\rangle=2.
\]
By \eqref{eq:norm} and the Cauchy--Schwarz inequality, each inner product is at most one. Hence both equal one, so we add $p\sim q$ and $r\sim s$ simultaneously.
\end{enumerate}
The closure is the least fixed point of these rules; only relations obtained by finitely many rule applications are used.
\end{definition}

\begin{definition}[$(RW3^+)$-admissibility]\label{def:rw3}
A simple configuration $G$ is called $(RW3^+)$-admissible if the closure of Definition~\ref{def:rw3rules} has the following properties: the two halves of every selected two-edge are identified; distinct selected edges lie in distinct equivalence classes; and every pair of distinct selected edges has orthogonal representatives.
\end{definition}

\begin{lemma}[Soundness of $(RW3^+)$]\label{lem:rw3}
If $G$ is $(RW3^+)$-admissible, then the displayed decomposition $P_G$ is irreducible.
\end{lemma}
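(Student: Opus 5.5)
Suppose $P_G=\sum_{t=1}^{s}g_t^2$ is an arbitrary SOS representation with coefficient vectors $v_p\in\mathbb R^s$. The goal is to show $s\ge R(G)$. The plan has two steps. First, by induction on the number of rule applications in Definition~\ref{def:rw3rules}, show that every derived relation holds for the actual vectors: $p\sim q$ implies $v_p=v_q$, and $p\perp_R q$ implies $\langle v_p,v_q\rangle=0$. Second, use admissibility to produce $R(G)$ pairwise orthonormal vectors in $\mathbb R^s$.

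For the first step, the base case is the line rule, which is immediate from \eqref{eq:line}. If $\delta(p,q)=0$ then $\langle v_p,v_q\rangle=0$. If $\delta(p,q)=1$ then $\langle v_p,v_q\rangle=1=\norm{v_p}\norm{v_q}$ by \eqref{eq:norm}. Equality in Cauchy--Schwarz, together with the positive inner product, then forces $v_p=v_q$. The same Cauchy--Schwarz equality argument is used whenever an inner product of two unit vectors is certified to equal $1$.

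The inductive step treats the remaining rules.
\begin{itemize}
\item \textbf{Saturation.} This is trivial, since equal vectors can be substituted into the inner product.
\item \textbf{Rectangle transfer.} The companion diagonal $\{r,s\}$ is already certified to take its prescribed value. If a cell is unoccupied, \eqref{eq:hole} gives $v=0$ and hence inner product $0$. If $r\perp_R s$, the inductive hypothesis gives $0$. If $r\sim s$, the hypothesis gives $v_r=v_s$, hence inner product $\norm{v_r}^2=1$. In every case $\langle v_r,v_s\rangle=\delta(r,s)$. Subtracting this in \eqref{eq:rect} gives $\langle v_p,v_q\rangle=\delta(p,q)$. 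This yields orthogonality when $\delta(p,q)=0$, and $v_p=v_q$ by Cauchy--Schwarz equality when $\delta(p,q)=1$.
\item \textbf{Complementary pair.} This is exactly the argument written in the rule: both inner products are bounded by $1$ and sum to $2$, so both equal $1$, and equality of the vectors follows.
\end{itemize}
Since the closure is the union of finitely generated stages, the induction covers all of it.

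For the second step, assign to each selected edge $e$ a unit vector $w_e$. Take $w_e=v_p$ for $e=\{p\}\in E_1$. For a two-edge $e=\{p,q\}$, admissibility gives $p\sim q$, so $v_p=v_q$, and we set $w_e=v_p$. By \eqref{eq:norm} each $w_e$ has norm $1$. For distinct selected edges $e,e'$, admissibility provides representatives $p\in\supp e$ and $p'\in\supp e'$ with $p\perp_R p'$. Step one gives $\langle v_p,v_{p'}\rangle=0$, and since all cells of one edge share the same vector, $\langle w_e,w_{e'}\rangle=0$. Thus $\{w_e\}$ is an orthonormal family of $R(G)$ vectors in $\mathbb R^s$, which forces $s\ge R(G)$. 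Because the representation was arbitrary, $\SOS(P_G)\ge R(G)$, and the displayed decomposition gives equality, so $P_G$ is irreducible. The only delicate point is the Cauchy--Schwarz equality step, which turns inner product $1$ into identical vectors. It relies on both vectors having norm exactly $1$, which holds because two-edges are selected only among occupied cells by \eqref{eq:norm}.
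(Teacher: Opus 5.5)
Your proof is correct and follows essentially the same route as the paper. The paper's proof only asserts that all closure relations hold for the coefficient vectors of an arbitrary SOS representation, and then reads off $R(G)$ orthonormal representatives; your induction over rule applications, with the Cauchy--Schwarz equality step turning a certified inner product $1$ into $v_p=v_q$, supplies exactly the detail the paper leaves implicit (the equivalence- and symmetric-closure steps are preserved automatically, since vector equality and orthogonality are themselves an equivalence relation and a symmetric relation).
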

\begin{proof}
The identifications and orthogonality relations generated by the closure hold for the coefficient vectors in every SOS representation. Hence each selected edge contributes a unit representative vector, and representatives of distinct selected edges are mutually orthogonal. Therefore every SOS representation requires at least $R(G)$ squares.
\end{proof}

Whenever we say that $(RW3^+)$ is fully verified or that its closure succeeds, we mean that $G$ has been checked to be $(RW3^+)$-admissible in the sense of Definition~\ref{def:rw3}. This is a sufficient condition only; failure does not by itself prove reducibility. When the full closure is not printed in the text, it is included in Supplementary Data~$\mathrm{S1}$, where an independent checker replays all identifications and orthogonality steps using the four rules of Definition~\ref{def:rw3rules}.

\subsection{Three basic reduction lemmas}

\begin{lemma}[Deletion of complete displayed squares]\label{lem:delete}
If a displayed SOS decomposition is irreducible, then deleting any collection of complete displayed squares leaves an irreducible displayed decomposition.
\end{lemma}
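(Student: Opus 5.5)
The argument is a contrapositive exchange argument: a shorter representation of any part of the decomposition can be glued back onto the untouched squares to shorten the whole. Write the displayed decomposition as
\[
P=\sum_{t\in T}f_t^2,\qquad |T|=R,
\]
with $\SOS(P)=R$. Fix a subset $D\subseteq T$ of complete displayed squares to be deleted, and set
\[
P'=\sum_{t\in T\setminus D}f_t^2.
\]
The goal is to show $\SOS(P')=|T\setminus D|$. Since the displayed decomposition of $P'$ already uses $|T\setminus D|$ squares, we have $\SOS(P')\le |T\setminus D|$, so only the reverse inequality needs proof.

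Suppose, for contradiction, that $P'=\sum_{s=1}^{r}g_s^2$ with $r<|T\setminus D|$ and each $g_s$ a real bilinear form. Then
\[
P=\sum_{s=1}^{r}g_s^2+\sum_{t\in D}f_t^2
\]
is again a representation of $P$ by real bilinear squares. It uses $r+|D|<|T\setminus D|+|D|=R$ squares, so $\SOS(P)<R$, contradicting irreducibility. Hence $\SOS(P')=|T\setminus D|$, and the remaining displayed decomposition is irreducible.

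In the configuration language, deleting the squares of $D$ means removing the corresponding selected edges of $G$. Their cells become holes, and the remaining configuration $G'$ still satisfies simplicity $(S)$, so $P_{G'}=P'$ is exactly the displayed SOS of $G'$. The lemma therefore says $G'$ is irreducible whenever $G$ is. Note that $G'$ need not be limited, since $E_1$ may lose edges; irreducibility here refers only to the displayed decomposition, so this causes no difficulty.

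I expect no real obstacle. The one point to check is that the deleted squares are complete squares $f_t^2$, not parts of a square. This makes $P-P'$ an explicit sum of $|D|$ squares that can be added back to any representation of $P'$. Deleting only one cell of a two-edge would break this, because the remainder would no longer be a sum of the original squares.
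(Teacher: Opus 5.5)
Your proof is correct and is exactly the paper's argument: if the remainder had a shorter representation, adding back the deleted complete squares would give a representation of $P$ with fewer than $R$ squares, contradicting irreducibility. Your extra remarks, that simplicity is preserved and that the remaining configuration need not be limited, are accurate but are not needed for the lemma itself.
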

\begin{proof}
If the remainder had a shorter representation, adding back the deleted squares would shorten the original decomposition.
\end{proof}

\begin{lemma}[Two-row or two-column strip bound]\label{lem:strip}
If a complete displayed sub-sum is supported on two rows and $q$ columns, or on $q$ rows and two columns, and contains more than $q+1$ squares, then that sub-sum is reducible.
\end{lemma}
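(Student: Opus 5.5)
The plan is to prove a stronger statement: every SOS biquadratic form supported on two rows and $q$ columns is a sum of at most $q+1$ squares of bilinear forms. The displayed sub-sum is by assumption an SOS form with this support. It lists more than $q+1$ squares, so this stronger statement shows it is reducible. The case of $q$ rows and two columns follows by exchanging the roles of $x$ and $y$, so I only treat two rows, say rows $1,2$ and columns $1,\dots,q$.

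\emph{Step 1: matrix reformulation.} Write each displayed square as $f_t=x_1a_t^{\mathsf T}y+x_2b_t^{\mathsf T}y$ with $a_t,b_t\in\mathbb R^q$. Put $A=\sum_t a_ta_t^{\mathsf T}$, $B=\sum_t(a_tb_t^{\mathsf T}+b_ta_t^{\mathsf T})$ and $C=\sum_t b_tb_t^{\mathsf T}$. Then the sub-sum equals $y^{\mathsf T}M(x)y$ with
\[
M(x)=x_1^2A+x_1x_2B+x_2^2C .
\]
This $M$ is a $q\times q$ real symmetric matrix-valued binary quadratic form. It is positive semidefinite for every real $x$, since $y^{\mathsf T}M(x)y=\sum_t f_t^2\ge0$.

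\emph{Step 2: factorization.} The aim is $M(x)=L(x)^{\mathsf T}L(x)$ with $L(x)=x_1L_1+x_2L_2$, where $L_1,L_2$ are real $k\times q$ matrices and $k\le q+1$. Expanding $\norm{L(x)y}^2$ then writes the sub-sum as the sum of the $k$ bilinear squares given by the rows of $L(x)y$. For this I would invoke the matrix Fej\'er--Riesz theorem (Jakubovi\v{c}). A positive semidefinite real univariate matrix polynomial of degree $2d$ factors as $N(t)^{\mathsf T}N(t)$ with $N$ a square real matrix polynomial of degree $d$. The steps are as follows.
\begin{enumerate}
\item Apply a real invertible linear change of the variables $(x_1,x_2)$ so that the new leading coefficient $M(1,0)$ has maximal rank. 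This change maps bilinear forms to bilinear forms, so it does not affect the count.
\item Dehomogenize by $t=x_2/x_1$ and factor $M(1,t)=N(t)^{\mathsf T}N(t)$ with $N$ of degree at most $1$.
\item Rehomogenize to obtain $L$ with $k=q$ rows.
\end{enumerate}
This already gives $q\le q+1$ squares, with one square to spare.

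\emph{Step 3: the degenerate case, which I expect to be the main obstacle.} Jakubovi\v{c}'s degree control needs care when $M(x)$ is singular for every $x$, because then the leading coefficient is singular after every change of variables. The remedy is to use the common kernel. If $K=\ker M(x)$ is constant in $x$, restrict $M$ to $K^\perp$, which has dimension $q'<q$. After an orthogonal change of the $y$-coordinates this gives a smaller form, and we induct on $q$. If the kernel varies with $x$, I would instead argue with Gram matrices directly. The Gram matrices of the sub-sum form the affine family
\[
\begin{pmatrix}A & \tfrac12B+S\\ \tfrac12B-S & C\end{pmatrix}, \qquad S \text{ skew-symmetric},
\]
which has size $2q\times 2q$. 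One chooses $S$ on the boundary of the positive semidefinite region, which reduces the rank, and iterates until the rank is at most $q+1$. This is exactly the ``one spare square'' margin in the statement.

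\emph{Step 4: conclusion.} Combine Steps 2 and 3: the sub-sum has an SOS representation with at most $q+1$ squares, strictly fewer than it displays, so it is reducible. The main risk lies in Step 3. If the citation-based factorization does not cleanly cover singular leading coefficients, the fallback is the explicit skew-parameter rank reduction, and there the bound $q+1$ rather than $q$ is what gives room.
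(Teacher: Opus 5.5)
Your outer reduction is the same as the paper's: it suffices that every SOS form supported on a $2\times q$ (or $q\times2$) strip has SOS rank at most $q+1$. The paper simply cites this as the known bounds $\BSR(2,q)\le q+1$ and $\BSR(q,2)\le q+1$. The gap is in your self-contained proof of that bound.

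\textbf{Step 2 rests on a false version of matrix Fej\'er--Riesz.} Over $\mathbb R$, a positive semidefinite matrix polynomial need not be $N(t)^{\mathsf T}N(t)$ with $N$ square; already $1+t^2$ fails. Moreover, the conclusion ``$q$ squares'' is false even when $M(1,0)$ is invertible. For odd $q$, take $(x_1^2+x_2^2)(y_1^2+\cdots+y_q^2)$. A $q$-square representation would give orthogonal $L_1,L_2$ with $L_1^{\mathsf T}L_2$ both skew-symmetric and orthogonal, which is impossible in odd dimension. So $q=3$ already needs four squares. Jakubovi\v{c}'s theorem holds with a complex square factor. Splitting $N=N_1+iN_2$ then only gives $M=N_1^{\mathsf T}N_1+N_2^{\mathsf T}N_2$, i.e.\ $2q$ real squares, which exceeds $q+1$ for $q\ge2$. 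So the main obstacle is not singularity, as you diagnose, but the passage from complex to real factors, and it is present in the nondegenerate case.

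\textbf{Step 3's fallback then carries everything, and it is not yet an argument.} Moving $S$ to the boundary of the spectrahedron lowers the rank of a Gram matrix $Q$ of rank $r$ only if some nonzero direction $\bigl(\begin{smallmatrix}0&S'\\-S'&0\end{smallmatrix}\bigr)$ lies in the span of the face of the PSD cone at $Q$. The dimension count $\binom{r+1}{2}+\binom q2>\binom{2q+1}{2}$ guarantees such a direction only while $r(r+1)>3q(q+1)$.
\begin{itemize}
\item For $q\le2$ this does bring you down to $q+1$.
\item For $q=3$ the count gives no progress below rank $5$, and for $q=4$ none below rank $7$.
\end{itemize}
The paper uses exactly $2\times3$, $2\times4$ and $4\times2$ overloads, so these cases matter. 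Extreme points of the spectrahedron need not have minimal rank, so ``iterate until rank $\le q+1$'' has no justification.

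\textbf{What is missing} is a genuinely $(q+1)$-specific statement. In the nondegenerate case, normalize $A=I$ and write $L_1=\bigl(\begin{smallmatrix}I_q\\0\end{smallmatrix}\bigr)$, $L_2=\bigl(\begin{smallmatrix}X\\w^{\mathsf T}\end{smallmatrix}\bigr)$. You must then show that some real skew-symmetric $S$ makes $C-(B/2+S)^{\mathsf T}(B/2+S)$ positive semidefinite of rank at most one. Neither step proves this. The simplest repair is to cite the known bound, as the paper does.
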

\begin{proof}
Apply the known bounds $\BSR(q,2)\le q+1$ and $\BSR(2,q)\le q+1$ \cite{QCXrank,LQ}, and then add back the remaining displayed squares. The sub-sum itself need not satisfy the limited condition.
\end{proof}

\begin{lemma}[Product relation implies rank reduction]\label{lem:product}
Let the displayed bilinear forms be $f_1,\ldots,f_R$. Suppose there are coefficients, not all zero, such that
\[
\sum_{a<b}c_{ab}f_af_b=0,
\]
and, after collecting identical formal products, the coefficient vector is nonzero. Then $\sum_{a=1}^{R}f_a^2$ is reducible.
\end{lemma}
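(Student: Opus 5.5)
The plan is to use a Gram matrix whose rank is forced to be at most $R-1$. Write $f=(f_1,\ldots,f_R)^{\mathsf T}$ for the vector of displayed bilinear forms, so that $\sum_a f_a^2=f^{\mathsf T}f$. Let $C$ be the symmetric $R\times R$ matrix with $C_{ab}=C_{ba}=c_{ab}/2$ for $a<b$ and zero diagonal. Then the hypothesis reads $f^{\mathsf T}Cf=0$ as a polynomial identity, and for every real $t$
\[
\sum_{a=1}^R f_a^2=f^{\mathsf T}(I+tC)f .
\]
The hypothesis that the collected coefficient vector is nonzero is what makes $C$ usable. I will read it as saying that $C\neq0$ and that the relation is a genuine, nontrivial relation among the products $f_af_b$. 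Formally cancelling terms would give nothing, so this reading is needed.

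Since $C$ is real symmetric, nonzero and traceless (its diagonal is zero), it has a strictly negative eigenvalue $\lambda_{\min}<0$. Set $t^*=-1/\lambda_{\min}>0$. Then $M=I+t^*C$ is positive semidefinite, because its eigenvalues are $1+t^*\lambda\ge0$. Moreover $M$ is singular, since the eigenvalue $\lambda_{\min}$ gives $1+t^*\lambda_{\min}=0$. Hence $\rank M\le R-1$.

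Factor $M=W^{\mathsf T}W$ with $W$ of size $(R-1)\times R$, and put $g=Wf$. Then
\[
\sum_a f_a^2=f^{\mathsf T}Mf=\sum_{s=1}^{R-1}g_s^2 ,
\]
and each $g_s$ is a real bilinear form, being a linear combination of the $f_a$. This is an SOS representation with fewer than $R$ squares, so the displayed decomposition is reducible. Equivalently, in terms of the monomial Gram matrix of the previous subsection: if $f=Fu$, then $F^{\mathsf T}MF$ is a Gram matrix of $P$ with rank at most $R-1$, and $\SOS(P)\le\rank Q$ applies.

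The main obstacle is the nondegeneracy condition, and this is the only delicate point in the argument. I must make sure the relation really yields a nonzero symmetric $C$. Two situations need attention. First, when $f_af_b$ and $f_cf_d$ are identical formal products, their coefficients have to be collected before $C$ is formed. Otherwise $C$ could be built from terms that cancel and would then be $C=0$, which gives nothing. The hypothesis "after collecting identical formal products, the coefficient vector is nonzero" is exactly what guarantees $C\neq0$. Second, diagonal products $f_a^2$ never appear in the relation, and this is what keeps $\tr C=0$, so that a negative eigenvalue exists. With both points checked, the eigenvalue argument above completes the proof.
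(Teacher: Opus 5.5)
Your proof is correct and is essentially the paper's argument: the paper builds the same zero-diagonal symmetric matrix $H$ (your $C$), uses $\operatorname{tr}H=0$ to get $\lambda_{\max}(H)>0$, and takes $Q=I-H/\lambda_{\max}(H)$, which is your $I+C/\lvert\lambda_{\min}(C)\rvert$ with $C$ replaced by $-C$. Positive semidefiniteness, the bound $\operatorname{rank}\le R-1$, and the factorization into at most $R-1$ bilinear squares then follow exactly as you describe.
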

\begin{proof}
Define a symmetric matrix $H$ by $H_{aa}=0$ and $H_{ab}=c_{ab}/2$. Then $H\ne0$, $\tr H=0$, and $f^{\mathsf T}Hf=0$. Let $\lambda=\lambda_{\max}(H)>0$. Then
\[
Q=I-H/\lambda\succeq0,
\qquad
\rank Q\le R-1,
\qquad
f^{\mathsf T}Qf=f^{\mathsf T}f.
\]
A positive semidefinite factorization of $Q$ yields a shorter SOS representation.
\end{proof}

We also repeatedly use the following two local identities from \cite{LQ}, with the indicated row and column labels pairwise distinct:
\begin{align}
&(xb+za)^2+(ya+zd)^2+(xd)^2+(yb)^2+(yd)^2\notag\\
&\qquad=(xb+yd+za)^2+(xd-yb)^2+(ya)^2+(zd)^2,\label{eq:S}\\[1mm]
&(xa)^2+(zc)^2+(ya)^2+(yb)^2+(zb)^2+(xc+zd)^2+(xd+za)^2\notag\\
&\qquad=(xc)^2+(ya+zb)^2+(yb-za)^2+(xd+zc)^2+(xa+zd)^2.\label{eq:E}
\end{align}
They reduce five squares to four and seven squares to five, respectively.

\subsection{Finite exclusion for a fixed ordinary skeleton}

Once the ordinary extremal skeleton $E_1$ is fixed, let $F$ denote the set of free cells. A candidate two-edge is an unordered pair of cells in $F$, so there are $\binom{|F|}{2}$ single candidates. If the target number of displayed squares is $R$, we must choose
\[
k=R-|E_1|
\]
pairwise support-disjoint two-edges. With $H=|F|-2k$ holes, the number of labeled families before pruning is
\begin{equation}\label{eq:rawcount}
M(|F|,k)=\frac{|F|!}{H!\,2^k\,k!}.
\end{equation}
For seven rows and 19 displayed squares we have $(|F|,k,H)=(15,6,3)$ and hence $M(15,6)=4\,729\,725$; for eight rows and 22 squares we have $(18,8,2)$ and $M(18,8)=310\,134\,825$ for each ordinary skeleton type; for five rows and 18 squares we have $(13,6,1)$ and $M(13,6)=270\,270$. These counts only show that the search space is finite. Testing every family directly for SOS rank would be opaque and unnecessary. We therefore separate the exhaustion mechanism from the mathematical reason why each rejected object is reducible.

\begin{algorithm}[htbp]
\caption{Finite exclusion for a fixed ordinary skeleton}\label{alg:finite}
\begin{algorithmic}[1]
\Require ordinary extremal skeleton $E_1$, free-cell set $F$, target number of squares $R$
\Ensure Let $k=R-|E_1|$. If every orbit is resolved, output the decision and proof for every orbit and list all irreducible orbit representatives; otherwise return ``unresolved orbit exists and do not claim a complete classification.
\State Enumerate all unordered pairs in $F$; call the set of single candidates $\mathcal C$
\ForAll{candidates $e\in\mathcal C$}
  \If{$e$ together with $E_1$ already yields a strip overload, a product relation from Lemma~\ref{lem:product}, or an instance of \eqref{eq:S}--\eqref{eq:E}}
    \State Delete $e$ permanently
  \EndIf
\EndFor
\State For pairs of retained candidates: declare them incompatible if their supports overlap, or if together with $E_1$ they already yield a verified local reduction
\State Build the necessary compatibility graph $G_M$ on retained candidates; join two vertices only when their supports are disjoint and no verified two-candidate reduction applies
\State Enumerate all $k$-cliques of $G_M$ exactly
\ForAll{$k$-cliques $K$}
  \If{$K$ contains a strip overload, product relation, or local identity involving at least three candidates}
    \State Delete $K$
  \EndIf
\EndFor
\State Compute the row-column automorphism group $\Aut(E_1)$ preserving $E_1$, and partition the remaining labeled families into orbits
\ForAll{orbit representatives $K$}
  \If{there is an explicit reducibility proof of a type listed in Section~\ref{subsec:certs}}
    \State Mark the orbit reducible
  \ElsIf{$K$ is $(RW3^+)$-admissible by Definition~\ref{def:rw3}}
    \State Mark the orbit irreducible
  \Else
    \State Mark as unresolved; draw no reducibility or irreducibility conclusion
  \EndIf
\EndFor
\If{an unresolved orbit exists}
  \State Return ``unresolved orbit exists; do not claim a complete finite classification
\Else
  \State Output all orbit decisions and proofs, and list all irreducible orbit representatives
\EndIf
\end{algorithmic}
\end{algorithm}

The compatibility graph provides only necessary conditions. Pairwise local compatibility does not imply that a family of six or eight two-edges is irreducible. The two final 18-square families in the $5\times5$ case are a useful warning: they pass many low-order checks, yet a ten-square sub-sum can still be compressed globally to nine squares. The finite parts of the $7\times4$, $8\times4$, and $5\times5$ arguments all follow Algorithm~\ref{alg:finite}.

\subsection{Positive cases, negative cases, and five types of reducibility proof}\label{subsec:certs}

The logical treatment of positive and negative cases must be kept separate. If a configuration is $(RW3^+)$-admissible, Lemma~\ref{lem:rw3} proves that it is irreducible. Conversely, failure of $(RW3^+)$ only means that this sufficient condition did not succeed; it does not imply reducibility. Accordingly, positive cases are accepted only after a full $(RW3^+)$ verification or another proof of irreducibility, whereas negative cases are accepted only after a shorter SOS, a nonzero product relation, or a low-rank positive semidefinite Gram representation. We never use ``$(RW3^+)$ fails or ``the program found no positive example as a proof of reducibility.

The negative cases in this paper use only the following five types of proof.

{\raggedright
\begin{enumerate}[leftmargin=2em,itemsep=0.45em]
\item \textbf{Strip overload.}\\
A complete displayed sub-sum is supported on two rows and $q$ columns, or on $q$ rows and two columns, and contains more than $q+1$ squares. By Lemma~\ref{lem:strip}, that sub-sum is reducible, hence so is the full displayed decomposition.
\item \textbf{Nonzero product relation in the original displayed space.}\\
For displayed forms $f_1,\ldots,f_R$, find coefficients not all zero such that $\sum_{a<b}c_{ab}f_af_b=0$. Lemma~\ref{lem:product} then gives a positive semidefinite Gram matrix of rank at most $R-1$.
\item \textbf{Explicit shorter integer SOS.}\\
Give bilinear forms $g_1,\ldots,g_s$ with integer coefficients, with $s<R$, and verify by exact expansion that $\sum_t g_t^2=P_G$.
\item \textbf{Product relation after an equal-length rewrite.}\\
First give $R$ bilinear forms $g_1,\ldots,g_R$ with integer coefficients and verify $\sum_t g_t^2=P_G$; then find a nonzero product relation among the $g_t$. This is still an application of Lemma~\ref{lem:product}, but the relation appears only after an equal-length change of displayed forms.
\item \textbf{Low-rank positive semidefinite Gram representation.}\\
Give a symmetric matrix $Q$ in the full cell-monomial space such that $u^{\mathsf T}Qu=P_G$, $Q\succeq0$, and $\rank Q<R$. Factoring $Q=V^{\mathsf T}V$ yields a shorter SOS. We use two exact verification mechanisms: explicit matrices over a real quadratic field, and rational contraction data; see Lemmas~\ref{lem:qgram} and~\ref{lem:contraction}.
\end{enumerate}
\par}

For eight rows we use one additional structural exclusion. After deleting a degree-one row, the seven-row restriction either has more than 19 displayed squares, or has exactly 19 but is not isomorphic to any of the three classes in Theorem~\ref{thm:74classes}. This is not a failure of a recursive test; it is a direct consequence of the established equality $z_2(7,4)=19$ and the complete three-class classification.

Numerical SDP or local search may be used to discover proof objects, but it is not part of the final acceptance criterion. Archived data contain only exact integers, rationals, or entries in $\mathbb Q(\sqrt2)$. Independent verification follows Algorithm~\ref{alg:verify}: candidate indices are regenerated, local supports are checked, polynomials are expanded exactly, the compatibility graph is rebuilt, target cliques are re-enumerated, automorphism orbits are recomputed, and the reducibility or irreducibility proof for every orbit is verified.

\begin{algorithm}[htbp]
\caption{Independent verification of orbit proofs}\label{alg:verify}
\begin{algorithmic}[1]
\Require skeleton $E_1$, free-cell set $F$, target number of squares $R$, and archived exact data for every orbit representative
\Ensure Let $k=R-|E_1|$. Accept the finite classification only if every orbit is resolved; otherwise declare the classification incomplete
\State Independently rebuild the single-candidate exclusions, two-candidate exclusions, and the necessary compatibility graph $G_M$
\State Independently enumerate all target $k$-cliques and construct orbits under $\Aut(E_1)$
\State Require the independently reconstructed orbit set to agree exactly with the archived orbit set
\ForAll{orbit representatives $K$}
  \If{$K$ has a nonzero product relation in the original displayed space}
    \State Accept as reducible by Lemma~\ref{lem:product}
  \ElsIf{the data provide a shorter integer SOS or a product relation after an equal-length rewrite}
    \State Verify $\sum_t g_t^2=P_G$ by exact expansion; in the second case also verify the nonzero product relation
  \ElsIf{the data give an explicit Gram matrix over the quadratic field}
    \State Perform the exact elimination of Lemma~\ref{lem:qgram} and require rank smaller than the displayed length
  \ElsIf{the data give a rational contraction proof}
    \State Use Lemma~\ref{lem:contraction} to verify existence of a positive semidefinite matrix of smaller rank inside the box
  \ElsIf{the data invoke a seven-row restriction}
    \State Delete the specified degree-one row and check the remaining length and the three isomorphism classes of Theorem~\ref{thm:74classes}
  \ElsIf{the data claim $(RW3^+)$-admissibility}
    \State Fully verify the closure according to Definition~\ref{def:rw3}
  \Else
    \State Mark as unresolved; draw no reducibility or irreducibility conclusion
  \EndIf
\EndFor
\If{an unresolved orbit exists}
  \State Return ``unresolved orbit exists; do not claim a complete finite classification
\Else
  \State Accept the finite classification
\EndIf
\end{algorithmic}
\end{algorithm}

\subsection{Verification of exact Gram representations}\label{subsec:gramcheck}

Arrange the monomials of the occupied cells into a column vector $u$. If a symmetric positive semidefinite matrix $Q$ satisfies $u^{\mathsf T}Qu=P_G$ and $\rank Q=r$, then a factorization $Q=V^{\mathsf T}V$ expresses $P_G$ as the sum of squares of the $r$ components of $Vu$. Thus a positive semidefinite Gram matrix of rank smaller than the displayed length is precisely a shorter SOS representation.

Each exact Gram construction first provides rational matrices $B_0,B_1,\ldots,B_d$ and a rational linear map $L$ satisfying
\begin{equation}\label{eq:affinemodel}
u^{\mathsf T}L\Bigl(B_0+\sum_{j=1}^{d}y_jB_j\Bigr)L^{\mathsf T}u
\equiv P_G\qquad\text{for every real parameter vector }y.
\end{equation}
Verification is by exact expansion: $B_0$ produces the target coefficients and each $B_j$ produces the zero polynomial. In most records, $L$ only groups certain cells into a common coordinate. If $Q(y)=B_0+\sum_j y_jB_j\succeq0$ and $\rank Q(y)\le r$, then $LQ(y)L^{\mathsf T}$ is also positive semidefinite of rank at most $r$, yielding a representation by at most $r$ squares.

\begin{lemma}[Explicit Gram representation over a quadratic field]\label{lem:qgram}
Suppose the data give a symmetric matrix $Q$ over $\mathbb Q(\sqrt2)$, with each entry stored as a rational pair $(a,b)$ representing $a+b\sqrt2$. Perform symmetric elimination: when the current pivot is strictly positive, take the Schur complement and increase the rank by one; when the pivot is zero, require the corresponding remaining row to vanish identically. Signs in the quadratic field are decided by rational comparison and, when necessary, by comparing $a^2$ with $2b^2$. If the elimination succeeds and the resulting rank $r$ is smaller than the displayed length, then $P_G$ is reducible.
\end{lemma}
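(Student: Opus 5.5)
The plan is to verify that the symmetric elimination described in Lemma~\ref{lem:qgram} certifies two facts: $Q$ is positive semidefinite, and $\rank Q=r$. Once both hold, the factorization argument of Section~\ref{subsec:gramcheck} applies. We may assume, as the data require, that $u^{\mathsf T}Qu=P_G$ has already been checked by exact expansion; this is a finite polynomial identity over $\mathbb Q(\sqrt2)$. Any such $Q$ factors as $Q=V^{\mathsf T}V$ with $V$ having $r$ rows, so $P_G$ is a sum of the $r$ squares of the entries of $Vu$. If $r<R(G)$, this is a shorter representation, so $\SOS(P_G)\le r<R(G)$ and $P_G$ is reducible.

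\textbf{Elimination argument.} I argue by induction on the size of $Q$. Order the coordinates and write
\[
Q=\begin{pmatrix}\alpha & w^{\mathsf T}\\ w & Q'\end{pmatrix}.
\]
If $\alpha>0$, the congruence $Q=L\,\mathrm{diag}(\alpha,\,Q'-ww^{\mathsf T}/\alpha)\,L^{\mathsf T}$ holds with $L$ unit lower triangular. Hence $Q\succeq0$ if and only if the Schur complement $S=Q'-ww^{\mathsf T}/\alpha\succeq0$, and $\rank Q=1+\rank S$.

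If $\alpha=0$, the procedure requires $w=0$. In that case $Q=0\oplus Q'$, so positivity and rank are inherited from $Q'$ with no rank increase. If $\alpha=0$ and $w\neq0$, the $2\times2$ principal minor on $\{1,j\}$ with $w_j\ne0$ equals $-w_j^2<0$, so $Q$ is not positive semidefinite. This is exactly why the procedure rejects that case.

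If $\alpha<0$ appears, the procedure fails and we conclude nothing. At termination, $Q$ is congruent to a diagonal matrix with $r$ positive entries and zeros elsewhere. Therefore $Q\succeq0$ and $\rank Q=r$.

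\textbf{Exactness over $\mathbb Q(\sqrt2)$.} Every quantity stays in $\mathbb Q(\sqrt2)$, since the field is closed under the Schur-complement operations, including division by a nonzero pivot. It remains to decide exactly the sign of $a+b\sqrt2$ with $a,b\in\mathbb Q$. This element is zero iff $a=b=0$, because $\sqrt2$ is irrational. If $a$ and $b$ have the same sign, or one of them is zero, the sign is evident. If they have opposite signs, then $a+b\sqrt2>0$ iff either $a>0$ and $a^2>2b^2$, or $a<0$ and $a^2<2b^2$. So every branch of the procedure is a rational comparison, and the certificate is exact.

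\textbf{Main subtlety.} The step needing care is the zero-pivot case. One must justify that requiring the remaining row to vanish is both sound and loses no rank information; the $2\times2$ minor argument above settles this. A secondary point is that $P_G$ only determines $Q$ up to the affine family \eqref{eq:affinemodel}. The lemma, however, concerns one explicit $Q$, and for that matrix the identity $u^{\mathsf T}Qu=P_G$ is checked directly. When the $L$-grouping is used, $LQL^{\mathsf T}$ is positive semidefinite with rank at most $\rank Q$, so the bound on the number of squares still holds.
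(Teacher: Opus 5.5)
Your proposal is correct and follows the same route as the paper. Both argue by exact symmetric Gaussian elimination over $\mathbb Q(\sqrt2)$: each positive pivot contributes one positive inertia direction through the Schur-complement congruence, each zero pivot with a vanishing row contributes a kernel direction, and exact sign tests make both positive semidefiniteness and the rank $r$ exact conclusions, whence $u^{\mathsf T}Qu=P_G$ gives $\SOS(P_G)\le r$. You additionally spell out details the paper leaves implicit: the unit-triangular congruence, the $2\times2$ minor $-w_j^2<0$ justifying rejection of a zero pivot with a nonzero row, and the case analysis for the sign of $a+b\sqrt2$.
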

\begin{proof}
This is exact symmetric Gaussian elimination over the real quadratic field. A positive pivot contributes one positive inertia direction; a zero pivot with a zero remaining row contributes a kernel direction; a negative pivot rejects the record. Only exact arithmetic and sign determination in $\mathbb Q(\sqrt2)$ are used, so both positive semidefiniteness and rank are exact conclusions. From $u^{\mathsf T}Qu=P_G$ we obtain $\SOS(P_G)\le r$.
\end{proof}

The remaining exact Gram arguments do not give a closed-form matrix. Instead they prove that an exact low-rank positive semidefinite matrix exists inside a small rational box. After a common row-column permutation, write the affine model $Q(y)$ as
\[
Q(y)=\begin{pmatrix}A(y)&D(y)\\D(y)^{\mathsf T}&E(y)\end{pmatrix},
\qquad A(y)\text{ is }r\times r,
\]
and let $k=N-r$. It is enough that
\begin{equation}\label{eq:Schurzero}
A(y)\succ0,\qquad
S(y):=E(y)-D(y)^{\mathsf T}A(y)^{-1}D(y)=0,
\end{equation}
Then $Q(y)\succeq0$ and $\rank Q(y)=r$. Let $F(y)$ be the vector of independent upper-triangular entries of $S(y)$, giving $e=k(k+1)/2$ equations. The archived data select $e$ active parameters, while the remaining parameters are fixed at rational values.

Below, vector norms are max norms and matrix norms are maximum absolute row sums. Let the active variables be $y\in\mathbb R^e$, with rational center $\bar y$ and radius $\varepsilon>0$. At the center define
\[
A_0=A(\bar y),\quad D_0=D(\bar y),\quad
V=A_0^{-1},\quad W_0=VD_0,\quad
F_0=F(\bar y),\quad J_0=DF(\bar y).
\]
Write each active direction in the same block form as
\[
B_j=\begin{pmatrix}A_j&D_j\\D_j^{\mathsf T}&E_j\end{pmatrix}.
\]
Let $Y$ be a rational $e\times e$ matrix, and define
\begin{align*}
\alpha&=\norm{YF_0}_{\infty},&
\beta&=\norm{I-YJ_0}_{\infty},\\
a&=\norm{V}_{\infty},&w&=\norm{W_0}_{\infty},\\
C_A&=\norm{\sum_j\abs{A_j}}_{\infty},&
C_D&=\norm{\sum_j\abs{D_j}}_{\infty}.
\end{align*}
When $\gamma=a\varepsilon C_A<1$, further define
\begin{align}
\delta&=\frac{a\varepsilon(C_D+C_Aw)}{1-\gamma},\label{eq:deltabound}\\
\Lambda&=\delta\sum_j\bigl(
\norm{D_j^{\mathsf T}}_{\infty}+r\norm{D_j}_{\infty}
+r\norm{A_j}_{\infty}(2w+\delta)\bigr),\label{eq:Lambdabound}\\
\theta&=\beta+\norm{Y}_{\infty}\Lambda.\label{eq:thetabound}
\end{align}
All quantities are obtained by finitely many operations on rational data.

\begin{lemma}[Rational existence proof for a low-rank positive semidefinite matrix]\label{lem:contraction}
If $A_0\succ0$, $\gamma<1$, and
\begin{equation}\label{eq:certificateconditions}
\theta<1,\qquad \alpha+\theta\varepsilon<\varepsilon,
\end{equation}
then there exists $y^*$ in the box $\norm{y-\bar y}_{\infty}\le\varepsilon$ such that $F(y^*)=0$; the corresponding matrix $Q(y^*)$ is positive semidefinite of rank $r$.
\end{lemma}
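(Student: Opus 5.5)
This is a Newton–Kantorovich / Krawczyk-type argument: show that the map $T(y)=y-YF(y)$ is a contraction of the box $\mathcal B=\{y:\norm{y-\bar y}_\infty\le\varepsilon\}$ into itself. The Banach (or Brouwer) fixed-point theorem then gives $y^*$ with $YF(y^*)=0$. Since $\theta<1$ forces $\norm{I-YJ_0}_\infty<1$, the matrix $YJ_0$ is invertible, so $Y$ is invertible and $F(y^*)=0$. Positive semidefiniteness and rank then follow from \eqref{eq:Schurzero}, provided we also show $A(y)\succ0$ throughout $\mathcal B$. So the plan has three parts:
\begin{enumerate}
\item control $A(y)^{-1}$ and $W(y)=A(y)^{-1}D(y)$ on the box;
\item bound the variation of the Jacobian $DF(y)$ from $J_0$ by $\Lambda$;
\item assemble the contraction estimate.
\end{enumerate}

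\emph{Step 1 (Neumann series).} Write $A(y)=A_0+\Delta_A$ with $\Delta_A=\sum_j(y_j-\bar y_j)A_j$, so $\norm{\Delta_A}_\infty\le\varepsilon C_A$ and $\norm{V\Delta_A}_\infty\le\gamma<1$. Hence $A(y)=A_0(I+V\Delta_A)$ is invertible on the box, with $\norm{A(y)^{-1}}_\infty\le a/(1-\gamma)$. Positive definiteness propagates by connectedness. The eigenvalues of the symmetric $A(y)$ vary continuously, never hit zero on the convex box, and are positive at $\bar y$, so $A(y)\succ0$ throughout. Next compare $W(y)$ with $W_0$:
\[
W(y)-W_0=A(y)^{-1}\bigl(\Delta_D-\Delta_AW_0\bigr).
\]
This gives $\norm{W(y)-W_0}_\infty\le \frac{a}{1-\gamma}\,\varepsilon(C_D+C_Aw)=\delta$, exactly \eqref{eq:deltabound}. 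In particular $\norm{W(y)}_\infty\le w+\delta$.

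\emph{Step 2 (Jacobian).} Differentiate $S(y)=E-D^{\mathsf T}A^{-1}D$ in direction $j$:
\[
\partial_jS=E_j-D_j^{\mathsf T}W-W^{\mathsf T}D_j+W^{\mathsf T}A_jW,
\]
using $\partial_j(A^{-1})=-A^{-1}A_jA^{-1}$. Subtract the same expression at $\bar y$ and write $W=W_0+\Delta$ with $\norm{\Delta}_\infty\le\delta$. Then
\[
\partial_jS(y)-\partial_jS(\bar y)=-D_j^{\mathsf T}\Delta-\Delta^{\mathsf T}D_j+\Delta^{\mathsf T}A_jW_0+W_0^{\mathsf T}A_j\Delta+\Delta^{\mathsf T}A_j\Delta .
\]
Bound each entry using the max norm and row-sum norms. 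Transposes cost a factor of $r$ when converting row-sum norms of $r$-column blocks, which is where the factors $r$ in \eqref{eq:Lambdabound} come from. Summing over $j$ controls $\norm{DF(y)-J_0}_\infty\le\Lambda$ for every $y$ in the box.

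\emph{Step 3 (contraction).} For $y,y'\in\mathcal B$, apply the mean value theorem componentwise:
\[
T(y)-T(y')=\int_0^1\bigl(I-Y\,DF(y'+t(y-y'))\bigr)(y-y')\,dt .
\]
This gives $\norm{T(y)-T(y')}_\infty\le(\beta+\norm{Y}_\infty\Lambda)\norm{y-y'}_\infty=\theta\norm{y-y'}_\infty$. Similarly,
\[
\norm{T(y)-\bar y}_\infty\le\norm{YF_0}_\infty+\theta\varepsilon=\alpha+\theta\varepsilon<\varepsilon,
\]
so $T$ maps $\mathcal B$ into itself. Banach's theorem yields the unique fixed point $y^*$, and the Schur complement criterion then finishes the proof.

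The main obstacle is Step 2. One must match the bookkeeping of norms of transposes and the quadratic term $\Delta^{\mathsf T}A_j\Delta$ exactly to the stated form of $\Lambda$. In particular, one must check that the entrywise max-norm of $F$ (the upper-triangular entries of $S$) is dominated by the chosen matrix-norm estimates. If the constant does not match literally, I would bound each entry of $\partial_jS(y)-\partial_jS(\bar y)$ directly by
\[
\norm{D_j^{\mathsf T}}_\infty\delta+r\norm{D_j}_\infty\delta+r\norm{A_j}_\infty\delta(2w+\delta),
\]
which is what $\Lambda$ encodes.
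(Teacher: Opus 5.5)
Your proposal is correct and follows essentially the same route as the paper. The shared steps are: Neumann-series control of $A(y)^{-1}$ and of $W(y)-W_0$; positive definiteness of $A(y)$ by continuity over the box; a term-by-term comparison of $\partial_jS$ at $y$ and at $\bar y$ giving $\|DF(y)-J_0\|_\infty\le\Lambda$; and the contraction of $y\mapsto y-YF(y)$, with invertibility of $Y$ from $\beta\le\theta<1$ and the Schur complement to conclude. Your explicit expansion in Step 2, using $\|X^{\mathsf T}\|_\infty\le r\|X\|_\infty$ for $r\times k$ blocks, already reproduces the constant in $\Lambda$ exactly, so the hedge at the end is unnecessary.
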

\begin{proof}
First, throughout the box,
\[
\norm{A(y)-A_0}_{\infty}\le\varepsilon C_A,\qquad
\norm{D(y)-D_0}_{\infty}\le\varepsilon C_D.
\]
Since $\norm{V(A-A_0)}_{\infty}\le\gamma<1$, the Neumann series yields $\norm{A(y)^{-1}}_{\infty}\le a/(1-\gamma)$. Hence $A(y)$ is nonsingular throughout the box. Along the segment from the center to any point of the box, $A(y)$ is continuous and symmetric; starting from the positive definite matrix $A_0$, it cannot lose positive definiteness without acquiring a zero eigenvalue. Therefore $A(y)\succ0$ throughout the box.

Let $W(y)=A(y)^{-1}D(y)$. From
\[
W(y)-W_0=A(y)^{-1}\bigl[(D-D_0)-(A-A_0)W_0\bigr]
\]
we obtain $\norm{W(y)-W_0}_{\infty}\le\delta$. The directional derivative of the Schur complement is
\[
\partial_j S=E_j-D_j^{\mathsf T}W-W^{\mathsf T}D_j+W^{\mathsf T}A_jW.
\]
Using $\norm{W^{\mathsf T}}_{\infty}\le r\norm{W}_{\infty}$ and comparing the derivative at the center with that at an arbitrary point of the box term by term, the change in each direction is at most
\[
\delta\bigl(\norm{D_j^{\mathsf T}}_{\infty}+r\norm{D_j}_{\infty}
+r\norm{A_j}_{\infty}(2w+\delta)\bigr).
\]
Extracting the upper-triangular entries does not increase any absolute value; summing over directions gives $\norm{DF(y)-J_0}_{\infty}\le\Lambda$. Hence the derivative norm of the map $\Phi(y)=y-YF(y)$ is at most $\theta<1$, and
\[
\norm{\Phi(y)-\bar y}_{\infty}
\le\norm{\Phi(\bar y)-\bar y}_{\infty}+\theta\norm{y-\bar y}_{\infty}
\le\alpha+\theta\varepsilon<\varepsilon.
\]
Thus $\Phi$ maps the closed box into itself and is a strict contraction, so iteration from any point converges to the unique fixed point in the box. Moreover $\beta\le\theta<1$, so $YJ_0$ is invertible and hence $Y$ is invertible. The fixed-point condition $YF(y^*)=0$ therefore implies $F(y^*)=0$. Finally, \eqref{eq:Schurzero} gives the claimed rank and positive semidefiniteness.
\end{proof}

Numerical computation is used only to suggest active variables, a center, and an approximate inverse. In the archived record, the center, radius, $Y$, and affine basis are all rational. Verification recomputes the rational elimination of $A_0$, the quantities $F_0,J_0$, the norm bounds, and the inequalities in \eqref{eq:certificateconditions}, and accepts the record only when the strict inequalities hold exactly.

To illustrate that the reducibility proofs are concrete rather than abstract labels, we give representatives of two of the shortest negative proofs. Both arise in the seven-row 19-square classification with the ordinary skeleton fixed and the new one-edge equal to $h1$. As above, $A3$ denotes the monomial in row $A$ and column~3.

\begin{example}[Explicit shorter integer SOS]\label{ex:intsos}
Take the six two-edges
\[
A3+B4,\quad A4+h2,\quad B2+h3,\quad C2+F1,\quad D1+D4,\quad E3+h4.
\]
The corresponding 19-square displayed decomposition equals the following sum of 17 squares:
\begin{align*}
&(A1)^2+(A3+B4-E2)^2+(A4)^2+(B1)^2+(B2+E4)^2+(B3+h2)^2\\
&+(C1)^2+(C2+F1)^2+(C4)^2+(D1+D4)^2+(D2)^2+(D3)^2\\
&+(A2+E3+h4)^2+(F3)^2+(F4)^2+(h1)^2+(h3)^2.
\end{align*}
Exact expansion shows that the two sides have identical coefficients as biquadratic forms, so this 19-square configuration is reducible.
\end{example}

\begin{example}[Product relation after an equal-length rewrite]\label{ex:switch}
Take the six two-edges
\[
A3+D4,\quad A4+h2,\quad B2+h3,\quad B4+F2,\quad C2+E3,\quad C3+h4.
\]
Set
\begin{align*}
g_1&=A1,&
g_2&=A2+h4,&
g_3&=A3+D4,&
g_4&=A4,\\
g_5&=B1,&
g_6&=B2+F4,&
g_7&=B3+h2,&
g_8&=B4,\\
g_9&=C1,&
g_{10}&=C2+E3,&
g_{11}&=C3,&
g_{12}&=C4+h3,\\
g_{13}&=D2,&
g_{14}&=D3,&
g_{15}&=E2,&
g_{16}&=E4,\\
g_{17}&=F2,&
g_{18}&=F3,&
g_{19}&=h1.
\end{align*}
Then $\sum_{t=1}^{19}g_t^2$ equals the original 19-square displayed decomposition, and
\[
g_1g_8-g_4g_5=0.
\]
By Lemma~\ref{lem:product}, the configuration is reducible.
\end{example}

\subsection{Universal cell bound}

By \eqref{eq:cellcount}, if $G$ is limited, then
\[
2R(G)=mn+z(m,n)-H\le mn+z(m,n).
\]
Therefore
\begin{equation}\label{eq:universal}
z_2(m,n)\le \left\lfloor\frac{mn+z(m,n)}2\right\rfloor.
\end{equation}
This elementary bound reduces the later $5\times5$ problem to excluding only the 18-square case.

\section{The $4\times4$ case: a complete upper-bound proof}\label{sec:44}

We begin with the smallest nontrivial square case. The goal is not merely to obtain a numerical value, but to exhibit the basic pattern used repeatedly later: fix the ordinary $C_4$-free extremal skeleton, then use strip bounds and product relations to eliminate possible two-edge positions. Previous work gives $\zRL(4,4)=10$ \cite{LQ}, hence $z_2(4,4)\ge10$, while the universal cell bound gives only $z_2(4,4)\le12$. We prove the sharp upper bound $z_2(4,4)\le10$ and determine the uniqueness of the extremal configuration.

\subsection{The ordinary extremal skeleton}

\begin{lemma}\label{lem:44base}
We have $z(4,4)=9$. Up to row and column permutations, there is a unique $C_4$-free $4\times4$ bipartite graph with nine one-edges.
\end{lemma}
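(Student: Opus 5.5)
The proof splits into an upper bound, a construction, and a uniqueness argument.

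\textbf{Upper bound $z(4,4)\le 9$.} Let $d_1,\dots,d_4$ be the column degrees of a $C_4$-free $4\times4$ graph with $e$ edges. $C_4$-freeness means every pair of rows shares at most one common column. Count pairs (column, pair of rows adjacent to it):
\[
\sum_{j=1}^4\binom{d_j}{2}\le\binom{4}{2}=6 .
\]
By convexity, with $\sum d_j=e$, the left side is minimized when the degrees are as equal as possible. For $e=10$ the degrees are $(3,3,2,2)$, so the left side is at least $3+3+1+1=8>6$, a contradiction. Hence $e\le 9$.

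\textbf{Construction.} Exhibit an explicit graph with $9$ edges: columns $1,2,3$ each meet rows $\{1,2\},\{1,3\},\{2,3\}$ respectively, and column $4$ meets rows $\{1\},\{2\},\{3\},\{4\}$. Equivalently, row $4$ has the single edge in column $4$, and rows $1,2,3$ each have degree three. Check directly that any two rows share at most one column. This gives $z(4,4)=9$.

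\textbf{Uniqueness.} Suppose $e=9$. The column degree multiset must satisfy $\sum\binom{d_j}{2}\le6$ with $\sum d_j=9$. The possibilities are:
\begin{itemize}
\item $(3,2,2,2)$, giving $3+1+1+1=6$, so equality holds;
\item $(4,2,2,1)$, giving $6+1+1+0=8$, which is excluded;
\item $(3,3,2,1)$, giving $3+3+1=7$, which is excluded.
\end{itemize}
So the column degrees are $(3,2,2,2)$ with equality. By the symmetric count, the row degrees are also $(3,2,2,2)$. Equality means every pair of rows shares exactly one column.

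The degree-three column covers $3$ row pairs, namely all pairs within a $3$-set $T$. The three degree-two columns must cover the other $3$ pairs, which are exactly the pairs containing the fourth row $r$. They therefore form the "star" at $r$: column pairs $\{r,t\}$ for $t\in T$.

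This structure is determined up to relabeling. It is the incidence graph of the near-pencil with $4$ points and $4$ lines, which is isomorphic to the construction above by transposition. Since the paper's notion of isomorphism allows only row and column permutations, not transposition, I will instead fix the representative as the pattern just derived, and check that the construction is the same pattern up to transposition.

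\textbf{Main obstacle.} The only delicate point is the bookkeeping of the degree case analysis. Equality in the pair count is what forces the rigid structure, so I would verify that all three degree multisets have been listed: with maximum degree at most $4$ and sum $9$, the remaining candidates such as $(4,3,1,1)$ already fail. Once equality is established, uniqueness is immediate.
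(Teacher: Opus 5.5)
Your double-counting bound and your equality analysis are correct, and they are the paper's argument with rows and columns interchanged. The paper bounds $\sum_i\binom{d_i}{2}\le 6$ over row degrees, forces the pattern $(3,2,2,2)$, and then observes that the three degree-two rows must use the column pairs $01,02,03$ not already used by the degree-three row. That is exactly your ``triple $T$ plus star at $r$'' structure.

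The genuine error is in the lower-bound construction. As you describe it, column $4$ meets all four rows, while columns $1,2,3$ meet $\{1,2\},\{1,3\},\{2,3\}$. This graph has $2+2+2+4=10$ edges and row degrees $(3,3,3,1)$. Rows $1$ and $2$ share both column $1$ and column $4$, which is a $C_4$. Your own count already forbids this graph, since $\sum_j\binom{d_j}{2}=1+1+1+6=9>6$. So the step ``check directly that any two rows share at most one column'' fails, and $z(4,4)\ge 9$ is not established by this example. The claim that it is the near-pencil up to transposition is also false: a triangle together with a line through all four points covers the pairs $\{1,2\},\{1,3\},\{2,3\}$ twice.

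The repair is to use as witness the structure your uniqueness argument produces: one column adjacent to rows $1,2,3$, and three columns adjacent to $\{1,4\},\{2,4\},\{3,4\}$. This has nine edges, and every pair of rows meets in exactly one column. After relabeling row $4\mapsto 0$ and the degree-three column $\mapsto 0$, it is precisely the paper's $E_1$.

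With that witness, the transposition worry disappears. Uniqueness only needs that every nine-edge $C_4$-free graph has this form, which your equality argument shows. In any case the near-pencil is self-dual: the transposed pattern again consists of one degree-three row on a triple and three degree-two rows forming the complementary star. So no transposition is ever required.
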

\begin{proof}
Let the four row degrees be $d_0,d_1,d_2,d_3$. Since the graph is $C_4$-free, any pair of columns can occur together in at most one row, so
\[
\sum_{i=0}^{3}\binom{d_i}{2}\le\binom42=6.
\]
If there were at least ten edges, then among integer degree vectors with total at least ten, the left-hand side would be at least the value for the degree pattern $(3,3,2,2)$, namely
\[
3+3+1+1=8,
\]
a contradiction. Hence there are at most nine edges. The following configuration attains nine:
\[
E_1=\{(0,1),(0,2),(0,3),(1,0),(1,1),(2,0),(2,2),(3,0),(3,3)\}.
\]
Thus $z(4,4)=9$. At equality the row degrees sum to nine and $\sum\binom{d_i}{2}=6$, so the only possible degree pattern is $(3,2,2,2)$. After permuting rows and columns, let row~0 have neighborhood $\{1,2,3\}$. This row already uses the column pairs $12,13,23$, so the three remaining degree-two rows cannot use any of these pairs and must instead use $01,02,03$, respectively. Hence the isomorphism type is unique, with grid
\[
\begin{array}{c|cccc}
 &0&1&2&3\\\hline
0&\hole&\one&\one&\one\\
1&\one&\one&\hole&\hole\\
2&\one&\hole&\one&\hole\\
3&\one&\hole&\hole&\one
\end{array}
\]
\end{proof}

\subsection{Two-edge exclusion and the exact value}

Let $u_{ij}=x_i y_j$. After fixing the ordinary skeleton above, the seven free cells are
\[
u_{00},\quad u_{12},u_{13},\quad u_{21},u_{23},\quad u_{31},u_{32}.
\]

\begin{theorem}\label{thm:44}
\[
z_2(4,4)=\zSL(4,4)=\zRL(4,4)=10.
\]
Moreover, the irreducible limited $4\times4$ configurations attaining ten displayed squares form a single isomorphism class under row and column permutations.
\end{theorem}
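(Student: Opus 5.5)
The lower bound needs no new construction. By \cite{LQ} we have $\zRL(4,4)=10$, and the hierarchy \eqref{eq:hierarchy} then gives $z_2(4,4)\ge\zSL(4,4)\ge\zRL(4,4)=10$. It therefore suffices to prove $z_2(4,4)\le10$; equality then holds throughout. By Lemma~\ref{lem:44base} the skeleton $E_1$ is unique up to isomorphism, so we fix it as displayed, with free cells $F=\{u_{00},u_{12},u_{13},u_{21},u_{23},u_{31},u_{32}\}$. Every limited configuration has $R=9+|E_2|$. An irreducible configuration with $R\ge11$ contains at least two two-edges, and by Lemma~\ref{lem:delete} we may delete two-edges until exactly two remain. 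So the upper bound reduces to one claim: every simple configuration consisting of $E_1$ and two support-disjoint two-edges on $F$ is reducible.

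We organize this finite check by symmetry. The automorphism group of $E_1$ contains the copy of $S_3$ that permutes the labels $\{1,2,3\}$ simultaneously on rows and columns. It also contains the transpose, since the grid is symmetric. First we classify the $\binom72=21$ single two-edges into orbits under this group. The orbits are: the pairs $\{u_{00},u_{ij}\}$; the row-degenerate pairs such as $\{u_{12},u_{13}\}$; the column-degenerate pairs such as $\{u_{21},u_{31}\}$; the transposed pairs such as $\{u_{12},u_{21}\}$; and the remaining nondegenerate pairs such as $\{u_{12},u_{23}\}$ and $\{u_{12},u_{31}\}$. For each orbit representative we decide reducibility of the ten-square form. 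For irreducible orbits we run the $(RW3^+)$ closure of Definition~\ref{def:rw3rules}. For reducible orbits we search for a strip overload (Lemma~\ref{lem:strip}), a product relation (Lemma~\ref{lem:product}), or an instance of the identities \eqref{eq:S} or \eqref{eq:E}. For example, a two-edge inside a $2\times q$ strip already containing many skeleton cells may overload it. Likewise, a two-edge sitting on a rectangle whose other diagonal is two skeleton one-edges gives a product relation of the form $f\cdot u = u'u''$ plus corrections. The result of this step is a short list of irreducible single-two-edge orbits. The claimed uniqueness of the ten-square extremal class amounts to showing that exactly one orbit survives and that it passes $(RW3^+)$.

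Next we handle pairs of two-edges. By Lemma~\ref{lem:delete}, each two-edge in an irreducible eleven-square configuration must individually lie in a surviving orbit. Hence only pairs of disjoint two-edges drawn from the surviving orbit need to be examined. This leaves very few cases up to $\Aut(E_1)$, essentially determined by how the two supports sit relative to each other and to $u_{00}$. For each such pair we exhibit an explicit reducibility proof. We first try a product relation among the eleven displayed forms, coming from a rectangle whose corners are covered by the two two-edges and skeleton cells. Failing that, we look for an instance of \eqref{eq:S} on a $3\times2$ or $2\times3$ sub-block, and then for an explicit ten-square integer rewrite.

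The main obstacle is this last step. Pairwise local checks may not immediately produce a certificate, and an equal-length rewrite (type~4 in Section~\ref{subsec:certs}) may be needed before a product relation appears. If so, we would try rewriting a skeleton square together with one two-edge via the $2$-by-$2$ identity $a^2+b^2+c^2+d^2=(a+d)^2+(b-c)^2$ pattern until a vanishing product becomes visible. Once all pairs are excluded, $z_2(4,4)\le10$ follows, completing the equality and the classification.
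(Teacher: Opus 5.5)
Your reduction is sound. With the skeleton fixed by Lemma~\ref{lem:44base}, Lemma~\ref{lem:delete} reduces the upper bound to showing that every configuration with exactly two two-edges is reducible. It also reduces uniqueness to showing that only one orbit of single two-edges survives. But the proposal stops there. Every step that carries content is phrased as a search (``we decide'', ``we exhibit'', ``we would try''). You never say which of the 21 single candidates are reducible, which pairs remain, or give a certificate for any of them. Since the theorem is exactly the outcome of this finite check, what you have is a plan rather than a proof.

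The single-candidate step follows directly from Lemma~\ref{lem:strip}. Each of the 15 candidates not of the form $u_{ij}+u_{jk}$ with $\{i,j,k\}=\{1,2,3\}$ puts five squares on a $2\times3$ strip:
\begin{itemize}
\item rows $\{0,i\}$ and columns $\{0,i,j\}$ for $u_{00}+u_{ij}$;
\item rows $\{0,i\}$ and columns $\{1,2,3\}$ for $u_{ij}+u_{ik}$, and the transpose for column-degenerate pairs;
\item rows $\{i,j\}$ and columns $\{0,i,j\}$ for $u_{ij}+u_{ji}$.
\end{itemize}
The six survivors form a single $S_3$-orbit; your examples $\{u_{12},u_{23}\}$ and $\{u_{12},u_{31}\}$ lie in the same orbit. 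This gives uniqueness. Irreducibility of the surviving orbit then follows from the known value $\zRL(4,4)=10$, without rerunning the closure.

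The real missing idea is in the pair step, which you yourself flag as the obstacle. Normalize $A=u_{12}+u_{23}$. The disjoint survivors are $u_{13}+u_{32}$, $u_{21}+u_{13}$, and $B=u_{32}+u_{21}$. The first two put six squares on the $4\times2$ strip of columns $2,3$ and on the $2\times4$ strip of rows $1,2$, respectively.

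For $\{A,B\}$ no strip overload is available, and a relation built around one two-edge and one rectangle cannot close. The monomial $x_0x_2y_1y_3$ in $u_{01}A$ can only be cancelled by $u_{03}u_{21}$, and $u_{21}$ is a hole unless a second two-edge occupies it. The fix is to chain three rectangles through row~$0$:
\[
u_{01}u_{12}=u_{02}u_{11},\qquad u_{01}u_{23}=u_{03}u_{21},\qquad u_{03}u_{32}=u_{02}u_{33}.
\]
These give the relation $u_{01}A-u_{03}B-u_{02}u_{11}+u_{02}u_{33}=0$ among the original displayed forms. So Lemma~\ref{lem:product} applies directly, and no equal-length rewrite is needed. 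Without this (or an equivalent) certificate, $z_2(4,4)\le10$ is not established.
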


\begin{proof}
For the lower bound, take the nine one-edges of Lemma~\ref{lem:44base} and add the two-edge $u_{12}+u_{23}$. The displayed decomposition has ten squares. The closure generated by Definition~\ref{def:rw3rules} identifies the two halves of this two-edge and proves mutual orthogonality of the ten displayed forms, so the configuration is $(RW3^+)$-admissible. The complete closure derivation is included in Supplementary Data~$\mathrm{S1}$ and is replayed step by step by an independent checker. Hence $z_2(4,4)\ge\zRL(4,4)\ge10$; the value $\zRL(4,4)=10$ is also known from \cite{LQ}. We now prove the upper bound $z_2(4,4)\le10$.

Let $G$ be an irreducible limited $4\times4$ configuration. By Lemma~\ref{lem:delete}, if a two-edge can occur in $G$, then deleting all other two-edges leaves an irreducible sub-decomposition consisting of the nine one-edges together with that one two-edge. Thus the $\binom72=21$ possible pairs of free cells can be analyzed one at a time.

First suppose the two-edge contains $u_{00}$, with the other cell $u_{ij}$, where $i,j\in\{1,2,3\}$ and $i\ne j$. On rows $\{0,i\}$ and columns $\{0,i,j\}$ there are already four one-edge squares, and the new two-edge adds a fifth square. Lemma~\ref{lem:strip} gives an upper bound of four for this $2\times3$ strip, so the sub-sum is reducible. All six pairs of this type are excluded.

Second, if the two free cells lie in a common row or column, for example $u_{ij}+u_{ik}$, then rows $\{0,i\}$ and the corresponding three columns again form a five-square $2\times3$ strip; the column case is the transpose. All six such pairs are excluded.

Third, if the two free cells are reversed positions $u_{ij}+u_{ji}$, then rows $\{i,j\}$ and columns $\{0,i,j\}$ again contain a five-square $2\times3$ strip. These three pairs are excluded.

Thus only six of the 21 possibilities remain:
\begin{equation}\label{eq:44six}
u_{ij}+u_{jk},\qquad \{i,j,k\}=\{1,2,3\}.
\end{equation}

We next prove that two two-edges cannot occur simultaneously. Using the $S_3$ symmetry on $1,2,3$, normalize the first two-edge to
\[
A=u_{12}+u_{23}.
\]
The only second candidates disjoint from $A$ are
\[
u_{13}+u_{32},\qquad u_{21}+u_{13},\qquad B=u_{32}+u_{21}.
\]
The first two alternatives give six-square sub-sums on a $4\times2$ or $2\times4$ strip, respectively, whereas Lemma~\ref{lem:strip} gives an upper bound of five. Hence both are reducible.

Only the pair $A,B$ remains. Set
\[
a=u_{01},\quad b=u_{02},\quad c=u_{03},\quad d=u_{11},\quad h=u_{33}.
\]
Using the rectangle product identities
\[
u_{01}u_{12}=u_{02}u_{11},\qquad
u_{01}u_{23}=u_{03}u_{21},\qquad
u_{03}u_{32}=u_{02}u_{33},
\]
we obtain
\begin{equation}\label{eq:44rel}
aA-cB-bd+bh=0.
\end{equation}
By Lemma~\ref{lem:product}, the corresponding displayed decomposition is reducible. More explicitly, the same product relation yields the compression
\begin{equation}\label{eq:44compress}
\begin{aligned}
&a^2+b^2+c^2+d^2+h^2+A^2+B^2\\
&\qquad=\Bigl(a-\frac{A}{\sqrt2}\Bigr)^2
+\Bigl(\frac{A}{\sqrt2}\Bigr)^2
+\Bigl(c+\frac{B}{\sqrt2}\Bigr)^2
+\Bigl(\frac{B}{\sqrt2}\Bigr)^2\\
&\qquad\quad+\Bigl(d+\frac{b}{\sqrt2}\Bigr)^2
+\Bigl(h-\frac{b}{\sqrt2}\Bigr)^2.
\end{aligned}
\end{equation}
Seven squares have been replaced by six. Therefore an irreducible configuration contains at most one two-edge, and hence
\[
z_2(4,4)\le9+1=10.
\]
Combining with the lower bound gives equality, and the hierarchy yields equality of all three parameters.

Finally, equality requires the nine one-edges together with one of the six two-edges in \eqref{eq:44six}. These six choices are equivalent under simultaneous permutations of $1,2,3$, so the extremal isomorphism class is unique. We may take $u_{12}+u_{23}$ as representative; its $(RW3^+)$ closure is verified according to Definition~\ref{def:rw3}, with the complete derivation stored in Supplementary Data~$\mathrm{S1}$.
\end{proof}

\section{Four-column extension structure: from $6\times4$ to $8\times4$}\label{sec:fourchain}

The previous section fixed the ordinary skeleton and then excluded two-edge placements exactly. For four columns and $m\ge6$, the ordinary skeleton has an even more stable structure, allowing a chain of the form ``classify lower-order extremizers -- delete one row -- exclude the next order. By the classical results of \v{C}ul\'ik and Reiman \cite{Culik,Reiman},
\begin{equation}\label{eq:zm4}
z(m,4)=m+6\qquad(m\ge6),
\end{equation}
and every ordinary extremal skeleton consists of six degree-two rows and $m-6$ degree-one rows; the six degree-two rows correspond to the column pairs
\[
12,13,14,23,24,34.
\]
We denote them throughout by
\begin{equation}\label{eq:corelabels}
A=12,\quad B=13,\quad C=14,\quad D=23,\quad E=24,\quad F=34.
\end{equation}

By \eqref{eq:cellcount} and $|E_1|=m+6$, every limited four-column configuration satisfies
\begin{equation}\label{eq:fourcell}
R(G)=\frac{5m+6-H}{2},\qquad
z_2(m,4)\le\left\lfloor\frac{5m+6}{2}\right\rfloor.
\end{equation}

\subsection{The $6\times4$ case: the exact value and the unique extremal structure}

Previous work proved $z_2(6,4)=\zSL(6,4)=\zRL(6,4)=16$ \cite{LQ}. The new point here is the classification of extremal configurations. The twelve free cells in the six core rows are
\[
A3,A4;\ B2,B4;\ C2,C3;\ D1,D4;\ E1,E3;\ F1,F2.
\]
Sixteen displayed squares mean twelve one-edges and four two-edges.

\begin{theorem}\label{thm:64class}
Up to row and column permutations, there is a unique irreducible limited $6\times4$ configuration attaining $z_2(6,4)=16$. A representative is
\begin{equation}\label{eq:64pairs}
A3+B4,\qquad B2+F1,\qquad C2+E3,\qquad D4+E1.
\end{equation}
The corresponding grid is
\begin{equation}\label{eq:64grid}
\begin{array}{c|cccc}
 &1&2&3&4\\\hline
A&\one&\one&a&\hole\\
B&\one&b&\one&a\\
C&\one&c&\hole&\one\\
D&\hole&\one&\one&d\\
E&d&\one&c&\one\\
F&b&\hole&\one&\one
\end{array}
\end{equation}
\end{theorem}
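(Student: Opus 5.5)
The proof has two parts: irreducibility of the representative \eqref{eq:64pairs}, and exclusion of every other 16-square family.

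\textbf{Irreducibility of the representative.} For $m=6$ the skeleton consists of the six core rows $A,\dots,F$ of \eqref{eq:corelabels}, with the twelve free cells listed above. The representative \eqref{eq:64pairs} is shown irreducible by running the closure of Definition~\ref{def:rw3rules} and checking $(RW3^+)$-admissibility (Lemma~\ref{lem:rw3}), just as for the $4\times4$ representative. The closure trace goes into Supplementary Data~S1.

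\textbf{Setting up the uniqueness count.} Sixteen squares means four two-edges on twelve free cells, so by \eqref{eq:fourcell} we have $H=4$. By Lemma~\ref{lem:delete}, every sub-family of an irreducible configuration is irreducible. I would therefore first classify the admissible single two-edges over the skeleton, then admissible pairs, then extend to four.

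\textbf{Single-candidate exclusions.} There are $\binom{12}{2}=66$ pairs of free cells, and I would reduce them up to $\Aut(E_1)$. This group is $S_4$ acting on columns, with the induced action on the rows labelled by column pairs, so the orbits are few. The orbit types are:
\begin{itemize}
\item pairs in a common row or column;
\item pairs in "complementary" rows (e.g.\ $A$ and $F$, whose supports are disjoint);
\item pairs in rows sharing one column, in their several relative positions.
\end{itemize}
For each orbit I would look for a two-row or two-column strip overload (Lemma~\ref{lem:strip}). For example, a row-degenerate two-edge $A3+A4$ placed with rows $A$ and $B$ would already give too many squares on a $2\times q$ strip. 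When no strip overload exists, I would look for a product relation obtained from rectangle identities $u_{ij}u_{k\ell}=u_{i\ell}u_{kj}$, as in \eqref{eq:44rel}. The surviving single orbits should be exactly the "diagonal" types appearing in \eqref{eq:64pairs}: a cell of one row paired with a cell of an adjacent row in a different column. Examples are $A3+B4$ (rows sharing column 1) and $B2+F1$ (rows sharing column 3).

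\textbf{Pair and clique stage.} Next I would build the compatibility graph $G_M$ on the surviving candidates, as in Algorithm~\ref{alg:finite}. Two candidates are adjacent when their supports are disjoint and no strip or product relation involving both applies; for two-edges in rows sharing columns, the identities \eqref{eq:S} and \eqref{eq:E} should also be tested. Then I enumerate the 4-cliques and reduce them modulo $\Aut(E_1)$. For each remaining orbit other than \eqref{eq:64pairs}, I must produce one of the five reducibility proofs of Section~\ref{subsec:certs}. I expect most orbits to die to a strip overload on a $4\times2$ or $2\times4$ strip, or to a product relation mixing three rectangle identities as in the $4\times4$ proof.

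\textbf{Main obstacle.} The hardest step is the last few 4-cliques, which pass all pairwise checks. For these, a global shorter SOS or an equal-length rewrite followed by a product relation, as in Example~\ref{ex:switch}, may be necessary. I would first search for a product relation in the original displayed space, by solving the linear system $\sum c_{ab}f_af_b=0$ over the 16 forms exactly. Only if that system has trivial solution would I search numerically for a low-rank Gram matrix and then certify it via Lemma~\ref{lem:qgram} or Lemma~\ref{lem:contraction}.

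Finally, among the surviving orbits, I would verify that every irreducible one is mapped onto \eqref{eq:64pairs} by some element of $\Aut(E_1)$. This yields the single isomorphism class.
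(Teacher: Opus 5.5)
Your architecture matches the paper's: strip pruning of the $66$ single candidates, a necessary compatibility graph built from strips, product relations and \eqref{eq:S}, clique enumeration modulo the column group $S_4$, and $(RW3^+)$ for the survivor. However, your single-candidate stage contains a false step, and it leaves a whole class of families unhandled.

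The row-degenerate two-edge $A3+A4$ is \emph{not} strip-overloaded. On rows $A,B$ the complete squares are $A1,A2,B1,B3$ and $A3+A4$, five squares on a $2\times4$ strip. That meets but does not exceed the bound $q+1=5$, and no other strip containing this square is overloaded either. In fact no single-candidate certificate can remove it: the skeleton plus $A3+A4$ is irreducible. To see this, delete row $h$ and the squares $B2+D4$, $C3+E1$ from Type~I of Theorem~\ref{thm:74classes} and apply Lemma~\ref{lem:delete}.

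So $30$ candidates survive: the six row-degenerate ones and the $24$ nondegenerate candidates in the $S_4$-orbit of $A3+B4$. Your orbit type ``pairs in a common row or column'' is really two orbits: column-degenerate pairs die by two-column strips, while row-degenerate ones survive. As written, your plan predicts that only the diagonal type survives, so it contains no argument excluding four-families that contain a row-degenerate two-edge.

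The paper closes exactly this case at the pair level. Apart from the eight candidates overlapping it, $R=A3+A4$ is incompatible with every other row-degenerate candidate (two-row, four-column overloads). Via the product identity \eqref{eq:64R}, it is also incompatible with $B2+C3$ and its images under the stabilizer of $R$. Its twelve remaining neighbours induce a bipartite compatibility graph, hence contain no triangle, so $R$ lies in no four-clique. Only then does one normalize to $N=A3+B4$. Its neighbourhood contains the unique triangle $\{B2+F1,\,C2+E3,\,D4+E1\}$ and no four-clique.

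The ``main obstacle'' you anticipate does not occur at $6\times4$. Once the pairwise exclusions are in place, the compatibility graph alone leaves only the orbit of the representative. Those exclusions need the product identities \eqref{eq:64R} and \eqref{eq:64N} and an instance of \eqref{eq:S}, not just strips. No global shorter SOS or Gram certificate is required.
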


\begin{proof}
There are $\binom{12}{2}=66$ single two-edge candidates on the twelve free cells. Two distinct core rows either share one column or are disjoint. Same-column pairings are excluded by two-column strip bounds; reversed choices on intersecting rows are excluded by a $2\times3$ strip; and for two disjoint rows one can adjoin the core row corresponding to the relevant column pair to obtain a $3\times2$ overload. Thus strip bounds eliminate all unsuitable single candidates, leaving 30: six row-degenerate two-edges and 24 nondegenerate candidates, all lying in the column-permutation orbit of $A3+B4$.

Among these 30 retained candidates, overlapping supports are incompatible. All remaining necessary incompatibilities are generated by the following table and all column permutations.
\begin{center}\small
\begin{tabular}{@{}lll r@{}}
\toprule
First pair & Second pair & Exclusion & Orbit size\\
\midrule
$A3+A4$ & $B2+B4$ & two-row, four-column overload & 12\\
$A3+A4$ & $F1+F2$ & two-row, four-column overload & 3\\
$A3+A4$ & $B2+C3$ & identity \eqref{eq:64R} & 24\\
$A3+B4$ & $A4+B2$ & two-row, four-column overload & 12\\
$A3+B4$ & $A4+C3$ & overload on rows $A,B,C,F$ and columns $3,4$ & 12\\
$A3+B4$ & $A4+E3$ & overload on rows $A,B,E,F$ and columns $3,4$ & 12\\
$A3+B4$ & $A4+D1$ & identity \eqref{eq:S} & 24\\
$A3+B4$ & $B2+C3$ & identity \eqref{eq:64N} & 12\\
\bottomrule
\end{tabular}
\end{center}
Let $R=A3+A4$, $N=A3+B4$, and $M=B2+C3$. Then
\begin{align}
(A1)(C4)-(C1)R+(A1)M-(A2)(B1)&=0,\label{eq:64R}\\
(A1)M-(C1)N-(A2)(B1)+(B1)(C4)&=0.\label{eq:64N}
\end{align}
Identity~\eqref{eq:S} is used with $(x,y,z)=(D,B,A)$ and $(a,b,d)=(4,1,3)$. Every strip case not written explicitly in the table contains six complete displayed squares, while Lemma~\ref{lem:strip} gives an upper bound of five.

Suppose now that the four selected two-edges include $R=A3+A4$. It has only the following twelve possible neighbors:
\[
\begin{array}{rrrr}
B2+D4,&B2+F1,&B4+D1,&B4+F2,\\
C2+E3,&C2+F1,&C3+E1,&C3+F2,\\
D1+F2,&D4+F1,&E1+F2,&E3+F1.
\end{array}
\]
With the numbering shown, the necessary compatibility graph is bipartite between $\{1,2,3,4,9,10\}$ and $\{5,6,7,8,11,12\}$. Hence it contains no triangle and cannot supply three neighbors of $R$. Therefore an extremal four-two-edge family contains no row-degenerate two-edge.

Normalize a nondegenerate candidate to $N=A3+B4$. The remaining nondegenerate neighbors and their possible larger-index neighbors are as follows.
\begin{center}\small
\begin{tabular}{@{}rll@{}}
\toprule
Index & Candidate & Compatible larger-index neighbors\\
\midrule
1 & $B2+F1$ & 2,4,6,8\\
2 & $C2+E3$ & 7,8,9\\
3 & $C2+F1$ & 6,8\\
4 & $C3+E1$ & 7,9\\
5 & $C3+F2$ & 6,8,9\\
6 & $D1+E3$ & none\\
7 & $D1+F2$ & none\\
8 & $D4+E1$ & none\\
9 & $D4+F1$ & none\\
10 & $E1+F2$ & none\\
11 & $E3+F1$ & none\\
\bottomrule
\end{tabular}
\end{center}
The unique triangle is $\{1,2,8\}$, which gives precisely \eqref{eq:64pairs}; there is no four-clique. Hence at most four two-edges are possible, and equality can occur only for the displayed representative. For this representative, the closure of Definition~\ref{def:rw3rules} identifies all four two-edges and proves mutual orthogonality of the sixteen displayed forms, so the configuration is $(RW3^+)$-admissible and therefore irreducible. The full closure derivation is included in Supplementary Data~$\mathrm{S1}$ and replayed independently. For the fixed ordinary skeleton there are six labeled representatives, forming a single column-permutation orbit, so the extremal isomorphism class is unique. The core remains equivalent under the column cycle $1\mapsto2\mapsto3\mapsto4\mapsto1$, so its automorphism group is transitive on the four columns; this will be used to normalize the new one-edge in the next subsection.
\end{proof}

\subsection{The $7\times4$ case: excluding 20 squares from the six-row classification}

The ordinary extremal $7\times4$ skeleton consists of the six-row core plus one degree-one row, denoted by $h$. The value $\zRL(7,4)=19$ is known \cite{CC}, so $z_2(7,4)\ge19$, while the cell bound gives only $z_2(7,4)\le20$. We now exclude 20 displayed squares. This argument uses only the equality classification in six rows and does not presuppose the later complete classification of seven-row 19-square configurations.

\begin{theorem}\label{thm:74value}
\[
z_2(7,4)=\zSL(7,4)=\zRL(7,4)=19.
\]
\end{theorem}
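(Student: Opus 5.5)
The lower bound comes from the hierarchy \eqref{eq:hierarchy} together with $\zRL(7,4)=19$ \cite{CC}. That gives $z_2(7,4)\ge\zSL(7,4)\ge\zRL(7,4)=19$, so every equality follows once we prove $z_2(7,4)\le19$. By \eqref{eq:fourcell} with $m=7$ we have $R(G)=(41-H)/2$. A limited configuration with $20$ squares therefore has exactly $H=1$ hole, $13$ one-edges and seven two-edges. It remains to show that every such configuration is reducible.

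The plan is to reduce to the six-row classification by deleting complete squares. The extra row $h$ carries one one-edge, and by the column transitivity noted at the end of the proof of Theorem~\ref{thm:64class} we may normalize it to $h1$. The other three cells $h2,h3,h4$ are free. Since $H=1$, at least two of them are occupied by two-edges, and it is exactly three unless the hole lies in row $h$. I will count the two-edges that meet row $h$.

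\textbf{Case 1: the hole is in the core, or some two-edge lies entirely in row $h$.} Let $t$ be the number of two-edges meeting row $h$. Delete the one-edge $h1$ and all $t$ of these two-edges. By Lemma~\ref{lem:delete} the remainder is irreducible. It lives on the six core rows with all twelve core one-edges, so it is a simple limited $6\times4$ configuration with $12+(7-t)$ squares. Because $z_2(6,4)=16$, this forces $7-t\le4$, i.e.\ $t\ge3$. Each of $h2,h3,h4$ lies in at most one two-edge, so $t\le3$ and hence $t=3$.

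If two of $h2,h3,h4$ formed a row-degenerate two-edge, then $t\le2$, which is impossible. So there are three two-edges $h_j+p_j$ with $p_j$ a free core cell, $j=2,3,4$. The hole is in the core, which has $12$ free cells. The remaining four core two-edges form a $16$-square configuration, and by Theorem~\ref{thm:64class} it is isomorphic to \eqref{eq:64pairs}. Its free core cells are occupied except for the two holes, $A4$ and $C3$ in the representative. The cells $p_j$ must avoid all cells occupied by \eqref{eq:64pairs}, but only two core cells are left uncovered. Three cells $p_j$ cannot fit into two, so this case is already impossible. The other possibility, that the hole lies in row $h$, again gives $t\le2$ and contradicts $t=3$. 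So in fact every $20$-square configuration is excluded by this count alone.

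I should double-check the accounting. $H=1$ means the twelve core free cells plus $h2,h3,h4$, fifteen cells in all, contain $14$ occupied cells, forming seven two-edges. Deleting row $h$'s involvement leaves $7-t$ two-edges inside the core, and these need $2(7-t)\le12$ cells, so $t\ge1$. The six-row bound $7-t\le4$ gives $t\ge3$. But with $t=3$, the four core two-edges occupy eight core cells and the three partners $p_j$ occupy three more, so eleven of the twelve core free cells are used and one is a hole, which is consistent with $H=1$. So the contradiction must come from the classification, not from counting. By Theorem~\ref{thm:64class}, the four core two-edges are an isomorphic copy of \eqref{eq:64pairs}, which leaves exactly two uncovered core free cells (two holes in \eqref{eq:64grid}). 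The three partners $p_j$ cannot fit into two cells. This is the key step, and it uses the uniqueness part of Theorem~\ref{thm:64class} essentially. Each step is a deletion via Lemma~\ref{lem:delete}, so no reducibility certificate is needed beyond the known value $z_2(6,4)=16$ and the classification.

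The main obstacle is making sure this deletion argument is airtight. In particular, the $16$-square subconfiguration must genuinely be a limited $6\times4$ configuration: it keeps all twelve core one-edges and uses only core free cells, which it does. If the count were looser, the fallback would be Algorithm~\ref{alg:finite} on the $M(15,7)$ families, using strip bounds, product relations and \eqref{eq:S}--\eqref{eq:E}.
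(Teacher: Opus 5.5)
Your deletion step is correct and matches the paper: the two-edges meeting row $h$ number $t\le 3$, and $12+(7-t)\le 16$ forces $t=3$. The six-row restriction is then an irreducible $16$-square configuration, hence the template \eqref{eq:64grid}. The gap is the final pigeonhole step. You claim \eqref{eq:64grid} leaves only two free core cells uncovered, but it leaves four. Its four two-edges occupy eight of the twelve free core cells, and the template visibly has four holes, $A4$, $C3$, $D1$, $F2$. Your own accounting already shows this: eight core cells for the core two-edges plus three partners $p_j$ leaves one hole. So the three partners fit into four holes and counting gives no contradiction. Your remark that nothing beyond $z_2(6,4)=16$ and the classification is needed is therefore false.

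What is missing is a genuine reducibility certificate for every injection $\{h2,h3,h4\}\to\{A4,C3,D1,F2\}$. First place the restriction exactly on the representative \eqref{eq:64grid}. Then put the one-edge at $h1$ using the column-cycle automorphism of that template; the symmetry must be of the template, not merely of the skeleton. The paper then excludes every other partner for $h2$:
\begin{itemize}
\item $C3$, by the nonzero product relation $(A1)(h2+C3)-(A2)(h1)-(C1)(A3+B4)+(B1)(C4)=0$ and Lemma~\ref{lem:product};
\item $D1$, because the five squares $A1,A2,D2,h1,h2+D1$ lie on rows $A,D,h$ and columns $1,2$, against a strip bound of $4$;
\item $F2$, by a six-square overload on rows $A,B,F,h$ and columns $1,2$.
\end{itemize}
It likewise excludes the same three cells for $h3$:
\begin{itemize}
\item $C3$, by a six-square overload on columns $1,3$ with rows $A,B,C,h$;
\item $D1$, by a six-square overload on columns $1,3$ with rows $A,B,D,h$;
\item $F2$, by the five-to-four compression \eqref{eq:74short} involving $B2+F1$, $h1$, $B3$, $B1$.
\end{itemize}
Hence both $h2$ and $h3$ would have to pair with $A4$, violating simplicity. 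These local certificates are the essential content of the upper bound, and your argument omits them.
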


\begin{proof}
Assume for contradiction that an irreducible limited 20-square configuration exists. It has 13 one-edges and seven two-edges. The degree-one row $h$ has one one-edge and three free cells. Deleting every complete displayed square whose support meets $h$ removes at most the one-edge square and three two-edge squares, hence at most four squares. By Lemma~\ref{lem:delete}, the six-row restriction remains irreducible and has at least $20-4=16$ displayed squares. But the six-row bound \eqref{eq:fourcell} gives $z_2(6,4)\le16$, so equality must hold. Theorem~\ref{thm:64class} forces the restriction to be isomorphic to the unique extremal type \eqref{eq:64grid}.

Therefore each of the three free cells in row $h$ must belong to a distinct two-edge crossing into the old core. If one were a hole, or if two were paired within row $h$, then fewer than three two-edges would meet $h$, and deletion would leave more than 16 squares in six rows, contradicting $z_2(6,4)=16$. Since the core is equivalent under the column cycle $1\mapsto2\mapsto3\mapsto4\mapsto1$, we may place the new one-edge at $h1$. The cells $h2,h3,h4$ must then be injected into the four old holes
\[
A4,\qquad C3,\qquad D1,\qquad F2.
\]
The following possible partners of $h2$ and $h3$ are excluded:
\begin{center}\small
\begin{tabular}{@{}lll@{}}
\toprule
New cell & Excluded partner & Reason\\
\midrule
$h2$ & $C3$ & identity \eqref{eq:74rel}\\
$h2$ & $D1$ & five-square overload on rows $A,D,h$ and columns $1,2$\\
$h2$ & $F2$ & six-square overload on rows $A,B,F,h$ and columns $1,2$\\
$h3$ & $C3$ & six-square overload on rows $A,B,C,h$ and columns $1,3$\\
$h3$ & $D1$ & six-square overload on rows $A,B,D,h$ and columns $1,3$\\
$h3$ & $F2$ & identity \eqref{eq:74short}\\
\bottomrule
\end{tabular}
\end{center}
where
\begin{equation}\label{eq:74rel}
(A1)(h2+C3)-(A2)(h1)-(C1)(A3+B4)+(B1)(C4)=0,
\end{equation}
and the final exclusion follows from the identity
\begin{align}
&(h3+F2)^2+(B2+F1)^2+(h1)^2+(B3)^2+(B1)^2\notag\\
&\qquad=(h3+B1+F2)^2+(h1-B3)^2+(B2)^2+(F1)^2.\label{eq:74short}
\end{align}
Thus both $h2$ and $h3$ would have to use the same old hole $A4$, violating simplicity. Hence no irreducible limited 20-square configuration exists, so $z_2(7,4)\le19$. Together with the known value $\zRL(7,4)=19$ \cite{CC} and the hierarchy \eqref{eq:hierarchy}, this proves equality.
\end{proof}

\subsection{Extremal 19-square configurations on $7\times4$}

Theorem~\ref{thm:74value} gives the exact seven-row value but is not sufficient for the eight-row case. If deleting a degree-one row from an eight-row candidate leaves exactly 19 squares, there is no numerical contradiction. To continue the exclusion, one must know which 19-square configurations can actually occur. We therefore classify all irreducible 19-square $7\times4$ configurations up to isomorphism.

After fixing the new one-edge to be $h1$, there are 15 free cells. Nineteen displayed squares require six support-disjoint two-edges, leaving three holes. Thus the number of labeled pair families before pruning is
\begin{equation}\label{eq:74raw}
M(15,6)=\frac{15!}{3!\,2^6\,6!}=4\,729\,725.
\end{equation}
Directly testing all families would be both opaque and unnecessary. We follow Algorithm~\ref{alg:finite}: prune single two-edge candidates locally, construct the necessary compatibility graph and enumerate all 6-cliques, quotient by the automorphism group of the ordinary skeleton, and finally attach a verifiable reducibility or irreducibility proof to every orbit.

\begin{theorem}\label{thm:74classes}
Up to row and column relabeling, the irreducible limited $7\times4$ configurations attaining $z_2(7,4)=19$ form exactly three isomorphism classes. Representatives are shown below; $\one$ denotes a one-edge, $\hole$ a hole, and two occurrences of the same letter one two-edge:
\begin{center}\small
\begin{tabular}{ccc}
{\normalfont\bfseries Type I}&{\normalfont\bfseries Type II}&{\normalfont\bfseries Type III}\\[2mm]
$\begin{array}{c|cccc}
 &1&2&3&4\\\hline
A&\one&\one&a&a\\
B&\one&b&\one&\hole\\
C&\one&c&d&\one\\
D&e&\one&\one&b\\
E&d&\one&\hole&\one\\
F&f&\hole&\one&\one\\
h&\one&f&c&e
\end{array}$
&
$\begin{array}{c|cccc}
 &1&2&3&4\\\hline
A&\one&\one&a&a\\
B&\one&b&\one&\hole\\
C&\one&c&d&\one\\
D&e&\one&\one&b\\
E&d&\one&\hole&\one\\
F&\hole&f&\one&\one\\
h&\one&f&c&e
\end{array}$
&
$\begin{array}{c|cccc}
 &1&2&3&4\\\hline
A&\one&\one&a&a\\
B&\one&b&\one&c\\
C&\one&\hole&d&\one\\
D&c&\one&\one&\hole\\
E&e&\one&\hole&\one\\
F&f&d&\one&\one\\
h&\one&f&e&b
\end{array}$
\end{tabular}
\end{center}
\end{theorem}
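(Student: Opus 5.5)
The plan follows Algorithm~\ref{alg:finite} for the seven-row skeleton, but first uses the six-row classification to shrink the search. Fix the ordinary skeleton as the core $A,\dots,F$ of \eqref{eq:corelabels} plus a degree-one row $h$. By the column-cycle transitivity noted after Theorem~\ref{thm:64class}, the one-edge of $h$ may be placed at $h1$. A 19-square configuration has six support-disjoint two-edges on the 15 free cells, with three holes.

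\textbf{Step 1: the two-edges meeting $h$.} Delete every displayed square whose support meets row $h$. If $t$ two-edges meet $h$, at most $t+1\le4$ squares are removed. By Lemma~\ref{lem:delete} and $z_2(6,4)=16$ we need $19-(t+1)\le16$, so $t\ge2$, and $t=2$ forces the six-row restriction to have exactly 16 squares.

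In the case $t=2$, Theorem~\ref{thm:64class} makes the core isomorphic to \eqref{eq:64grid}. The two crossing two-edges must then pair $h$-cells with the old holes $A4,C3,D1,F2$. The local exclusions tabulated in the proof of Theorem~\ref{thm:74value} for $h2,h3$ apply verbatim, and the analogous table for $h4$ is obtained by the column symmetry. The survivors are checked directly.

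In the case $t=3$, the core carries 15 squares. It therefore contains three core two-edges, since the fourth core slot is lost to the crossing edges. Here I do not classify 15-square cores. Instead I run the clique search.

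\textbf{Step 2: exhaustive enumeration with pruning.} Enumerate single candidates on the 15 free cells. Discard candidates by strip overloads (Lemma~\ref{lem:strip}), by product relations (Lemma~\ref{lem:product}), and by the identities \eqref{eq:S}, \eqref{eq:E}, \eqref{eq:74rel}, \eqref{eq:74short}. Build the necessary compatibility graph $G_M$, using the pairwise exclusions from the six-row table together with new ones involving $h$. Enumerate all 6-cliques exactly, imposing the Step~1 constraint $t\ge2$. Delete cliques carrying a three-or-more-candidate local reduction. Quotient by $\Aut$ of the skeleton fixing $h1$, which is the stabilizer of column 1, essentially an $S_3$ acting on columns 2,3,4 with the induced row action.

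\textbf{Step 3: certify every orbit.} For each surviving orbit representative I either give a negative proof of one of the five types of Section~\ref{subsec:certs}, or verify $(RW3^+)$-admissibility (Definition~\ref{def:rw3}). Examples~\ref{ex:intsos} and \ref{ex:switch} are the model negative proofs. The three displayed grids must be exactly the orbits passing $(RW3^+)$, which gives irreducibility by Lemma~\ref{lem:rw3}. Pairwise non-isomorphism is shown by invariants. Type II differs from Type I only in moving a hole between $F1$ and $F2$, so a suitable invariant is the degeneracy pattern together with the multiset of which holes share rows or columns with the $h$-row partners. Finally I check that every row-column isomorphism between configurations preserves the skeleton up to the column-1 normalization, so orbit counting under $\Aut$ agrees with counting isomorphism classes.

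\textbf{Main obstacle.} The hard part is Step~3 for orbits that pass every local test yet fail $(RW3^+)$. For these no small identity applies, and a reducibility certificate must be found by hand or by numerical search. My first attempt would be an equal-length rewrite exposing a $2\times2$ product relation, as in Example~\ref{ex:switch}. Failing that, I would search for an explicit integer shorter SOS. Only if both fail would I fall back on a low-rank Gram matrix checked via Lemma~\ref{lem:qgram} or Lemma~\ref{lem:contraction}. Until every such orbit is resolved, the classification into exactly three classes cannot be claimed.
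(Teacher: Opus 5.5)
Your plan is essentially the paper's proof. You fix $h1$, run Algorithm~\ref{alg:finite} (compatibility graph, exact 6-clique enumeration, quotient by $\Gamma\cong S_3$), close every orbit with a certificate from Section~\ref{subsec:certs} or a full $(RW3^+)$ closure, and get non-isomorphism of the survivors from the fact that any isomorphism fixes row $h$ and column~1 and so lies in $\Gamma$.

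Your filter $t\ge2$ is valid; it is in fact already implied, since $t\le1$ would need five pairwise compatible core two-edges and the six-row compatibility graph has no 5-clique. Do not, however, use the separate $t=2$ shortcut as written, for two reasons. It ignores two-edges lying inside row $h$, such as $h3+h4$, which survive single-candidate pruning. Also, the six-row template has no nontrivial symmetry fixing column~1, because its automorphism group is the cyclic group generated by $1\mapsto2\mapsto3\mapsto4\mapsto1$; so the $h4$ exclusions must be computed directly rather than transported from those for $h2,h3$. The simplest fix is to let the clique search cover $t=2$ as well.
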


\begin{proof}
Fix the ordinary extremal skeleton and place the unique one-edge in the degree-one row at $h1$. The 15 free cells give $\binom{15}{2}=105$ single two-edge candidates. We apply Algorithm~\ref{alg:finite} with target length 19, so $k=6$.

Local reductions eliminate 48 single candidates, all by two-row or two-column strip overload, leaving 57. Build the necessary compatibility graph on these 57 candidates: two vertices are adjacent only if their supports are disjoint and their joint occurrence has not already been proved reducible locally. The two-candidate exclusions comprise 390 overlapping-support pairs, 84 strip overloads, 54 product relations, 48 instances of \eqref{eq:S}, and 6 instances of \eqref{eq:E}, leaving 1014 graph edges. The graph contains 1114 six-cliques. Every irreducible six-two-edge family must be one of these cliques. Applying larger strip bounds to the six-cliques excludes 120 more families, leaving 994 labeled families.

The automorphism group of the fixed ordinary skeleton consists of the six permutations of columns $2,3,4$ together with the induced permutations of the core rows; hence $\Gamma\cong S_3$ and $|\Gamma|=6$. The 994 labeled families form 170 orbits under $\Gamma$. Of these, 86 orbits, corresponding to 506 labeled families, are excluded directly by product relations in the original displayed space. This leaves 84 orbits, each of which is resolved by one of the verifiable proofs described in Section~\ref{subsec:certs}:
\begin{center}\small
\begin{tabular}{@{}lrr@{}}
\toprule
Proof type & Orbits & Labeled families\\
\midrule
Explicit shorter integer SOS & 44 & 256\\
Product relation after an equal-length rewrite & 10 & 56\\
Low-rank Gram matrix over $\mathbb Q(\sqrt2)$ & 2 & 12\\
Low-rank Gram matrix from rational contraction & 25 & 146\\
Irreducible representatives verified by full $(RW3^+)$ closure & 3 & 18\\
\bottomrule
\end{tabular}
\end{center}
All Gram representations for negative cases have rank strictly below 19. Examples~\ref{ex:intsos} and~\ref{ex:switch} give short representatives of the first two proof types; the exact quadratic-field Gram matrices are verified by Lemma~\ref{lem:qgram}, and the rational contraction records by Lemma~\ref{lem:contraction}. Hence every 19-square candidate outside the three positive orbits is reducible.

All three positive representatives are $(RW3^+)$-admissible by Definition~\ref{def:rw3}: the closure identifies all six two-edges and proves mutual orthogonality between distinct displayed forms, so they are irreducible. The complete closure derivations are included in Supplementary Data~$\mathrm{S1}$ and replayed by the independent checker. For reference, the two-edges of the three representatives are
\begin{align*}
\mathrm{I}:\quad& A3+A4,\ B2+D4,\ C2+h3,\ C3+E1,\ D1+h4,\ F1+h2;\\
\mathrm{II}:\quad& A3+A4,\ B2+D4,\ C2+h3,\ C3+E1,\ D1+h4,\ F2+h2;\\
\mathrm{III}:\quad& A3+A4,\ B2+h4,\ B4+D1,\ C3+F2,\ E1+h3,\ F1+h2.
\end{align*}
The finite classification leaves 18 labeled irreducible configurations. Each of the three representatives has orbit size six, and the three orbits are disjoint, so the 18 configurations form exactly three isomorphism classes. Types I and II are distinguished by the presence of a column-degenerate two-edge. Types I and III are also inequivalent: any isomorphism must preserve the unique degree-one row and its ordinary column~1, and must send the row $A=12$ containing the unique row-degenerate pair back to itself; hence column~2 is fixed. Only the identity and the transposition of columns 3 and 4 remain, and neither maps Type I to Type III.

It is worth emphasizing that deleting row $h$ from any of the three extremal classes leaves only 15 squares, not the six-row extremal value 16. Thus uniqueness of the six-row extremal template must not be misread as saying that the internal six rows of every larger extremizer must themselves equal that template.
\end{proof}

\subsection{The $8\times4$ case: excluding 22 squares using the seven-row classification}

The ordinary extremal $8\times4$ skeleton consists of the six core rows and two degree-one rows. Up to column permutation, the two one-edges in the degree-one rows have only two essential position types: the same column or different columns. Denote the two ordinary skeletons by
\[
\mathcal A_8:\ h1,i1,
\qquad
\mathcal B_8:\ h1,i2.
\]
The value $\zRL(8,4)=21$ is known \cite{CC}, hence $z_2(8,4)\ge21$. For an explicit lower-bound witness, start from Type I of Theorem~\ref{thm:74classes}, add a row $i$, take the one-edge $i1$, and add the two-edge $B4+i3$. This gives
\begin{equation}\label{eq:lower8}
\begin{array}{c|cccc}
 &1&2&3&4\\\hline
A&\one&\one&a&a\\
B&\one&b&\one&g\\
C&\one&c&d&\one\\
D&e&\one&\one&b\\
E&d&\one&\hole&\one\\
F&f&\hole&\one&\one\\
h&\one&f&c&e\\
i&\one&\hole&g&\hole
\end{array}
\end{equation}
It has 14 one-edges, seven two-edges, and four holes. It is $(RW3^+)$-admissible by Definition~\ref{def:rw3}, and hence is an irreducible limited 21-square configuration. The complete closure derivation is contained in Supplementary Data~$\mathrm{S1}$ and replayed step by step by the independent checker.

By \eqref{eq:fourcell}, eight rows allow at most 23 displayed squares. If an irreducible 23-square configuration existed, deleting one complete two-edge square would produce an irreducible 22-square configuration. Thus it suffices to exclude 22 squares. Each ordinary skeleton has 18 free cells. A 22-square configuration consists of 14 one-edges and eight two-edges, leaving two holes; hence each skeleton has the following number of labeled pair families before pruning:
\begin{equation}\label{eq:84raw}
M(18,8)=\frac{18!}{2!\,2^8\,8!}=310\,134\,825.
\end{equation}

\begin{theorem}\label{thm:84}
\[
z_2(8,4)=\zSL(8,4)=\zRL(8,4)=21.
\]
\end{theorem}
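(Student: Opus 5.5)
The lower bound is already settled: the explicit configuration \eqref{eq:lower8} is $(RW3^+)$-admissible, and \eqref{eq:hierarchy} then gives $21\le\zRL(8,4)\le\zSL(8,4)\le z_2(8,4)$. Since an irreducible 23-square configuration would yield an irreducible 22-square one by Lemma~\ref{lem:delete}, it suffices to show that no irreducible limited 22-square configuration exists. I would treat the two ordinary skeletons $\mathcal A_8$ (one-edges $h1,i1$) and $\mathcal B_8$ (one-edges $h1,i2$) separately. In each case there are 14 one-edges, eight two-edges and two holes among the 18 free cells.

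The first step is a structural reduction by deleting a degree-one row. Let $G$ be an irreducible 22-square candidate. Deleting every complete displayed square whose support meets row $i$ removes the one-edge there and at most three two-edges, so the restriction to the core and row $h$ is an irreducible seven-row configuration with at least 18 squares. If at most two two-edges meet row $i$, the restriction has at least 19 squares. By Theorem~\ref{thm:74value} it then has exactly 19, and by Theorem~\ref{thm:74classes} it must be isomorphic to Type I, II or III. The same reasoning applies with $h$ and $i$ exchanged.

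This splits the problem into two regimes. In the first, some degree-one row meets at most two two-edges. Here I would take each of the three templates, placed in its six labeled positions compatible with the fixed $h1$, and try every way of adding row $i$. This means choosing two further two-edges that use row $i$'s three free cells together with the template's three holes, with one hole left over in that region (two holes in total). This is a tiny enumeration, and I would kill each extension by a strip overload, a product relation via Lemma~\ref{lem:product}, or identities \eqref{eq:S}--\eqref{eq:E}, in the style of the $h2$/$h3$ table in the proof of Theorem~\ref{thm:74value}.

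In the second regime, both rows $h$ and $i$ have all three free cells matched into distinct two-edges that cross into the core, so six of the eight two-edges meet $\{h,i\}$. Hence the core supports only two internal two-edges plus a few single cells, and the six-row deletion gives at least $22-8=14$ squares, which is no contradiction by itself. For this regime I would run Algorithm~\ref{alg:finite} with $k=8$ on each skeleton, restricted to families of this shape, and quotient by the stabilizer of the skeleton: the $S_3$ on columns $2,3,4$ for $\mathcal A_8$ together with the swap $h\leftrightarrow i$, and a smaller group for $\mathcal B_8$. Each surviving orbit would receive one of the five proof types of Section~\ref{subsec:certs}. Whenever deleting the two two-edges that lie purely in the core leaves a seven-row restriction with 19 squares that is not one of the three types, the seven-row structural exclusion disposes of it directly.

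I expect the main obstacle to be this second regime. Local pruning is weak there, and surviving orbits may resist product relations and need equal-length rewrites or low-rank Gram certificates from Lemmas~\ref{lem:qgram} and~\ref{lem:contraction}. If some orbit survives every attempt, I would look for a ten-square sub-sum compressible to nine, as in the $5\times5$ case.
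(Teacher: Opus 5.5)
Your plan uses the same ingredients as the paper: the witness \eqref{eq:lower8}, the reduction from 23 to 22 squares, Theorems~\ref{thm:74value} and~\ref{thm:74classes}, and Algorithm~\ref{alg:finite} with the five certificate types. You apply them in a different order.

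The paper enumerates all 8-cliques on $\mathcal A_8$ and $\mathcal B_8$ and removes strip overloads and product relations. Only then does it delete each degree-one row, as one filter among several.

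You instead split on whether some degree-one row $u$ has $h_u+p_u\ge1$, and in that regime you use the classification generatively. The restriction to the other seven rows is one of the three templates. After normalizing the template, what remains is the column of the new one-edge (four choices) and the $18+9=27$ ways to place two two-edges meeting the new row with $h_u+p_u=1$. That gives at most $324$ labeled candidates. The many non-template families that the paper generates and then discards never appear. In exchange, the paper's uniform pipeline is simpler to implement and to replay with a single checker.

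There is a gap in your case split. The complement of your first regime is $h_h=p_h=h_i=p_i=0$: each degree-one row meets three distinct two-edges whose other halves lie outside that row. But the other half may lie in the \emph{other} degree-one row, so these two-edges need not cross into the core.

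If $e_{hi}$ two-edges join $h$ and $i$, then only $6-e_{hi}$ two-edges meet $\{h,i\}$, and the core carries $2+e_{hi}$ internal two-edges. Nothing you say excludes such pairs. On $\mathcal A_8$ the column-degenerate $ha+ia$ does fail, by the $3\times2$ strip with the core row $\{1,a\}$. However, a nondegenerate pair such as $h2+i3$ passes every strip test against the skeleton. If you restrict Algorithm~\ref{alg:finite} literally to "six two-edges crossing into the core", the enumeration is not exhaustive.

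Your closing remark in that regime is also vacuous. There, deleting either degree-one row removes exactly four squares and leaves 18. Deleting the core-internal two-edges removes no row at all. So Theorem~\ref{thm:74classes} never applies, and the second regime must be closed entirely by strip bounds, product relations, shorter SOS, equal-length rewrites and low-rank Gram certificates.

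Likewise, nothing guarantees that the template extensions in your first regime all fall to strips, product relations or \eqref{eq:S}--\eqref{eq:E}. Be prepared to use all five certificate types there too.
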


\begin{proof}
Apply Algorithm~\ref{alg:finite} separately to $\mathcal A_8$ and $\mathcal B_8$ with target length 22, so $k=8$. The stabilizer automorphism groups of the two ordinary skeletons have orders 12 and 4, respectively.

We first generate the single two-edge candidates and the necessary compatibility graph and enumerate all 8-cliques. Obvious reducible cliques are removed by strip bounds, the remainder is quotiented by the automorphism group of the ordinary skeleton, and direct product relations eliminate another layer. The exact counts are as follows.
\begin{center}\small
\begin{tabular}{@{}lrr@{}}
\toprule
Stage & $\mathcal A_8$ & $\mathcal B_8$\\
\midrule
All single two-edge candidates & 153 & 153\\
Retained after local exclusions & 90 & 92\\
Edges in the necessary compatibility graph & 2877 & 2998\\
Number of 8-cliques & 28086 & 41598\\
After further strip-overload exclusions & 24366 & 33386\\
Orbits remaining after original-space product relations & 829 & 3758\\
\bottomrule
\end{tabular}
\end{center}
For $\mathcal A_8$, the two-candidate exclusions consist of 123 strip overloads, 81 product relations, 72 instances of \eqref{eq:S}, and 12 instances of \eqref{eq:E}; for $\mathcal B_8$, the corresponding counts are 143, 73, 80, and 12.

For each remaining orbit, delete each of the two degree-one rows in turn. If the resulting seven-row restriction has more than 19 squares, it contradicts Theorem~\ref{thm:74value}; if it has exactly 19, then it must be isomorphic to one of Types I--III in Theorem~\ref{thm:74classes}, otherwise it is excluded. Any orbit not eliminated this way is ruled out by an explicit shorter SOS, a product relation after an equal-length rewrite, or a positive semidefinite Gram representation of rank at most 21 verified by Lemma~\ref{lem:qgram} or Lemma~\ref{lem:contraction}. The remaining orbit counts are
\begin{center}\small
\begin{tabular}{@{}lrr@{}}
\toprule
Reduction proof for the orbit & $\mathcal A_8$ & $\mathcal B_8$\\
\midrule
Seven-row upper bound or three-class extremal classification & 283 & 1365\\
Explicit shorter integer SOS & 434 & 1880\\
Product relation after an equal-length rewrite & 71 & 265\\
Low-rank positive semidefinite Gram representation & 41 & 248\\
\midrule
Total & 829 & 3758\\
\bottomrule
\end{tabular}
\end{center}
The first class of exclusions uses the already proved value $z_2(7,4)=19$ and the complete three-class classification, not the failure of any recursive test. Thus all 22-square orbits on both ordinary skeletons are excluded. If an irreducible 23-square configuration existed, deleting one complete two-edge square would yield an irreducible 22-square configuration, a contradiction. Since \eqref{eq:fourcell} already rules out more than 23 squares, we obtain $z_2(8,4)\le21$. Combining this with \eqref{eq:lower8}, the known value $\zRL(8,4)=21$, and the hierarchy \eqref{eq:hierarchy} gives equality.
\end{proof}

\subsection{Structural extension to general $m\times4$}

The $6\to7\to8$ chain can be abstracted into a deletion inequality.

\begin{proposition}\label{prop:fourdelete}
Let $m\ge7$, and let $G$ be an irreducible limited $m\times4$ configuration with $R$ displayed squares. For a degree-one row $u$, let $h_u$ be the number of holes in that row and let $p_u$ be the number of two-edges whose two halves both lie in that row. Then
\begin{equation}\label{eq:fourdelete}
R-4+h_u+p_u\le z_2(m-1,4).
\end{equation}
In particular,
\[
z_2(m,4)\le z_2(m-1,4)+4.
\]
\end{proposition}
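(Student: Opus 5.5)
We follow the deletion argument of Theorem~\ref{thm:74value}, keeping exact count of how many displayed squares meet row $u$. Since $u$ is a degree-one row of an ordinary extremal $m\times4$ skeleton, it has exactly one one-edge and three free cells. Each free cell of $u$ is in exactly one of three states: a hole, one half of a two-edge lying entirely in row $u$, or one half of a two-edge whose other half lies outside row $u$. Let $c_u$ be the number of crossing two-edges. Counting the three free cells gives
\[
h_u+2p_u+c_u=3 .
\]
The number of complete displayed squares whose support meets $u$ is therefore
\[
1+p_u+c_u=4-h_u-p_u .
\]

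Delete all of these squares. By Lemma~\ref{lem:delete}, the remaining displayed decomposition is irreducible. Its number of squares is
\[
R-(4-h_u-p_u)=R-4+h_u+p_u .
\]
It is supported on the other $m-1$ rows, since nothing remaining touches row $u$.

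We must still check that this restriction is a limited $(m-1)\times4$ configuration. Simplicity and $C_4$-freeness are inherited. The number of one-edges is $(m+6)-1=(m-1)+6$. Since $m\ge7$, we have $m-1\ge6$, so \eqref{eq:zm4} shows this equals $z(m-1,4)$. The extremality count is used only here, and it is where the hypothesis $m\ge7$ enters.

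Finally, the cells of row $u$ play no role in the restricted sum, so it may be regarded as a configuration on $[m]\setminus\{u\}$ and the four columns. Cells of other rows that were partners of crossing two-edges now count as holes, which the definition allows. By the definition of $z_2(m-1,4)$,
\[
R-4+h_u+p_u\le z_2(m-1,4).
\]
For the corollary, an extremal $G$ with $m\ge7$ has $m-6\ge1$ degree-one rows, so such a row $u$ exists. Taking $R=z_2(m,4)$ and using $h_u,p_u\ge0$ gives $z_2(m,4)\le z_2(m-1,4)+4$.

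We expect no serious obstacle; the only delicate point is the bookkeeping identity $h_u+2p_u+c_u=3$. In particular, a row-degenerate two-edge in $u$ uses two free cells but accounts for only one deleted square, which is why $p_u$ appears in \eqref{eq:fourdelete}.
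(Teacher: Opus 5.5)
Your proposal is correct and follows the paper's proof essentially verbatim: the same count $h_u+2p_u+c_u=3$ of the free cells in row $u$, deletion of the $4-h_u-p_u$ squares meeting that row, Lemma~\ref{lem:delete} for irreducibility of the remainder, and the observation that the remaining skeleton is an extremal $(m-1)\times4$ skeleton. Your extra checks are details the paper leaves implicit: the limited condition via \eqref{eq:zm4} with $m-1\ge6$, and the existence of a degree-one row for the corollary.
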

\begin{proof}
Row $u$ contains one one-edge and three free cells. Let $x_u$ be the number of two-edges having exactly one half in that row. Then
\[
3=h_u+2p_u+x_u.
\]
Deleting all complete displayed squares meeting the row removes
\[
1+p_u+x_u=4-h_u-p_u
\]
squares. The remaining ordinary skeleton is still an extremal $(m-1)\times4$ skeleton, and the remainder is irreducible by Lemma~\ref{lem:delete}. Hence its length is at most $z_2(m-1,4)$, giving \eqref{eq:fourdelete}; the stated corollary follows from $h_u,p_u\ge0$.
\end{proof}

Inequality~\eqref{eq:fourdelete} is usually weaker numerically than the cell bound. Its real value is structural near equality. If deletion leaves exactly $z_2(m-1,4)$ squares, the remainder must belong to an extremal isomorphism class on $(m-1)\times4$. Thus a natural next step toward $9\times4$ is not a blind enumeration of all nine-row pairings, but first a classification of all irreducible 21-square extremizers on $8\times4$, followed by restriction of near-extremal nine-row candidates to those templates.

We now extend this structure to a fixed number of columns. Let $\beta=\binom n2$.

\begin{lemma}[Ordinary extremal skeleton at fixed width]\label{lem:fixedwidth}
If $m\ge\beta$, then
\[
z(m,n)=m+\beta.
\]
Moreover, every $C_4$-free ordinary skeleton attaining this bound has exactly $\beta$ degree-two rows, one for each pair of the $n$ columns, and the remaining $m-\beta$ rows all have degree one.
\end{lemma}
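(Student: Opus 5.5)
The plan is to double count pairs of columns covered by rows, as in Lemma~\ref{lem:44base}. Let the row degrees be $d_1,\dots,d_m$. The graph is $C_4$-free, so each unordered pair of columns lies in the neighborhood of at most one row. Hence
\[
\sum_{i=1}^{m}\binom{d_i}{2}\le\beta .
\]
The key observation is that $\binom{d}{2}\ge d-1$ for every integer $d\ge0$, with equality exactly when $d\in\{1,2\}$. For $d=0$ the left side is $0$ and the right side is $-1$, so the inequality is strict. For $d\ge3$ we have $\binom d2-(d-1)=\binom{d-1}{2}>0$.

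Summing over rows gives
\[
|E_1|-m=\sum_i(d_i-1)\le\sum_i\binom{d_i}{2}\le\beta,
\]
so $z(m,n)\le m+\beta$. For the matching lower bound, which needs $m\ge\beta$, take $\beta$ rows indexed by the $\beta$ column pairs, each row adjacent to its own pair. Give each of the remaining $m-\beta$ rows a single edge to an arbitrary column. Two columns then share a common row only in their designated pair row, so the graph is $C_4$-free and has $2\beta+(m-\beta)=m+\beta$ edges.

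Now suppose the bound is attained. Then both inequalities in the chain are equalities. The first equality forces every $d_i\in\{1,2\}$: there are no empty rows and no rows of degree at least three. The second equality forces the degree-two rows to cover every column pair, and $C_4$-freeness means each pair is covered at most once. So there are exactly $\beta$ degree-two rows, one per pair, and the remaining $m-\beta$ rows have degree one.

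The only point needing care is the termwise inequality, in particular that degree-zero rows are strictly wasteful. The statement is implicitly for $n\ge2$, and the hypothesis $m\ge\beta$ is used only for the construction, so no step is a real obstacle.
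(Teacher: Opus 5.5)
Your proposal is correct and follows essentially the same route as the paper: bound $\sum_i\binom{d_i}{2}\le\beta$ by $C_4$-freeness, use $d-1\le\binom{d}{2}$ to get $|E_1|\le m+\beta$, realize the bound with the $\beta$ pair rows plus degree-one rows, and read off the structure from equality in both inequalities. Your explicit check that equality in $d-1\le\binom d2$ holds only for $d\in\{1,2\}$ is exactly the step the paper compresses into ``both inequalities are equalities.''
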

\begin{proof}
Let the row degrees be $d_1,\ldots,d_m$. Since the graph is $C_4$-free, each pair of columns can occur together in at most one row, so
\[
\sum_{i=1}^{m}\binom{d_i}{2}\le\binom n2=\beta.
\]
Also, for every integer $d_i\ge0$, we have $d_i-1\le\binom{d_i}{2}$, so
\[
|E_1|-m=\sum_{i=1}^{m}(d_i-1)
\le\sum_{i=1}^{m}\binom{d_i}{2}
\le\beta,
\]
Hence $|E_1|\le m+\beta$. When $m\ge\beta$, choose $\beta$ degree-two rows representing all column pairs and add $m-\beta$ arbitrary degree-one rows; this attains $m+\beta$. If equality holds, both inequalities above are equalities. Thus every row has degree one or two, and every pair of columns occurs exactly once. Therefore there are exactly $\beta$ degree-two rows and all remaining rows have degree one.
\end{proof}

Choose a set $U$ of $s$ degree-one rows. Let $h(U)$ be the number of holes in these rows, $p(U)$ the number of two-edges with both halves in $U$, and $x(U)$ the number of two-edges with exactly one half in $U$. The $s$ rows contain $ns$ cells, of which $s$ are occupied by one-edges, so
\[
ns-s=h(U)+2p(U)+x(U).
\]
Deleting every complete square meeting $U$ removes $s+p(U)+x(U)=ns-h(U)-p(U)$ displayed squares. The remainder is still an irreducible limited $(m-s)\times n$ configuration, hence
\begin{equation}\label{eq:generaldelete}
R-ns+h(U)+p(U)\le z_2(m-s,n).
\end{equation}
This shows that exact small-size values and extremal classifications propagate structural information to larger sizes. It is not, however, an automatic recurrence for all $m$: when deletion attains equality, the structure of the lower-order extremal configurations is still needed.

\section{The $5\times5$ case: finite classification and a ten-to-nine-square compression}\label{sec:55}

The four-column method relies on the rigidity of the ordinary extremal skeleton at fixed width. To test whether the upper-bound ideas extend beyond four columns, we study the balanced square case $5\times5$. Here the ordinary $C_4$-free extremal skeleton is no longer unique, but the universal cell bound is only one above the known value $\zRL(5,5)$, so only 18-square candidates need to be excluded. This case also exhibits a phenomenon different from the four-column chain: local strip arguments and product relations eliminate almost all candidates, but the final step requires a nine-square representation using bilinear forms outside the span of the original displayed forms.

\subsection{The two isomorphism types of ordinary extremal skeletons}

\begin{lemma}\label{lem:55base}
We have $z(5,5)=12$. Up to row and column permutations, there are exactly two isomorphism types of $C_4$-free $5\times5$ bipartite graphs with 12 one-edges; representatives are
\begin{align}
\mathcal A_5:\quad E_1&=\{(0,j),(j,0),(j,j):1\le j\le4\},\label{eq:A55}\\
\mathcal B_5:\quad E_1&=\{(0,0),(0,1),(0,2),(1,0),(1,3),(1,4),\notag\\
&\hspace{20mm}(2,1),(2,3),(3,1),(3,4),(4,2),(4,3)\}.\label{eq:B55}
\end{align}
\end{lemma}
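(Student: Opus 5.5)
I would follow the template of Lemma~\ref{lem:44base}: a degree-counting upper bound, the two explicit constructions, and an equality analysis that pins down the structure. For the upper bound, let the row degrees be $d_0,\dots,d_4$. The $C_4$-free condition gives
\[
\sum_i\binom{d_i}{2}\le\binom52=10,
\]
and by convexity the left side is minimized, for a fixed total, when the degrees are as equal as possible. With 13 edges the balanced pattern is $(3,3,3,2,2)$, which gives $3+3+3+1+1=11>10$. Hence $z(5,5)\le12$. Checking that $\mathcal A_5$ and $\mathcal B_5$ are $C_4$-free (any two rows share at most one column) is routine, so $z(5,5)=12$.

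For the classification, I would first list the admissible degree vectors with sum $12$ and $\sum\binom{d_i}{2}\le10$. These are $(3,3,2,2,2)$ with value $8$, $(4,2,2,2,2)$ with value $10$, and $(3,3,3,2,1)$ with value $10$. Other candidates such as $(4,3,2,2,1)$ give $6+3+1+1=11$ and fail. Applying the same analysis to column degrees constrains the structure further.

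Case $(4,2,2,2,2)$: the degree-four row uses all $\binom42=6$ pairs among its four columns. Each degree-two row must therefore use the remaining column $0$ together with one of those four columns. Four such rows must use distinct partners, since $\{0,j\}$ may appear only once. This gives exactly $\mathcal A_5$ (rows $j$ meet $\{0,j\}$ after relabeling).

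Case $(3,3,3,2,1)$: three triples in a 5-set pairwise meeting in at most one column. I would normalize them to $\{0,1,2\},\{0,3,4\},\{1,3,x\}$, show the third is forced up to symmetry, and then place the degree-two row on an unused pair. I would check whether the result is $\mathcal B_5$ or yields an extra type. Note that $\mathcal B_5$ has row degrees $(3,3,2,2,2)$ as listed, so $(3,3,3,2,1)$ must either be impossible or coincide with a transposed copy of $\mathcal A_5$ or $\mathcal B_5$ via column degrees. This is where I would use the column-degree constraint.

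Case $(3,3,2,2,2)$: the two triples share at most one column. If they are disjoint, which is impossible in 5 columns, or if they share one column, normalize to $\{0,1,2\},\{0,3,4\}$. That leaves unused pairs among $\{1,2\}\times\{3,4\}$, the four ``cross'' pairs. The three degree-two rows choose three of these four pairs, all equivalent under symmetry, giving $\mathcal B_5$. I would also have to handle the possibility that degree-two rows use pairs containing $0$ or already-covered pairs; these are excluded by $C_4$-freeness.

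The main obstacle is the bookkeeping in the mixed cases, in particular ensuring that a row-degree pattern like $(3,3,3,2,1)$ does not give a third type. I would settle this by transposition: every extremal graph has some degree sequence on rows and on columns, and I would show that one of the two sides always has pattern $(4,2,2,2,2)$ or $(3,3,2,2,2)$. Finally, $\mathcal A_5\not\cong\mathcal B_5$ because their row-degree multisets differ, and the column-degree multisets also differ, so not even a transposition identifies them.
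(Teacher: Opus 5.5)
Your upper bound and your treatment of the row patterns $(4,2,2,2,2)$ and $(3,3,2,2,2)$ match the paper's proof. However, the case you single out yourself, row pattern $(3,3,3,2,1)$, is left open, and the resolution you sketch for it does not work as stated. The lemma classifies graphs up to independent row and column permutations (Definition~\ref{def:iso}), not up to transposition, and the multiset of row degrees is invariant under such permutations. So if a $C_4$-free graph with row degrees $(3,3,3,2,1)$ existed, it would be a third isomorphism class whatever its column degrees were. Showing that its column pattern is $(4,2,2,2,2)$ or $(3,3,2,2,2)$ would only identify it with the transpose of $\mathcal A_5$ or $\mathcal B_5$, which is not an acceptable outcome. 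Of your two alternatives, ``impossible or a transposed copy'', only the first is compatible with the statement. (You could turn the transposition idea into a contradiction by noting that the column-degree multisets of $\mathcal A_5$ and $\mathcal B_5$ are $(4,2,2,2,2)$ and $(3,3,2,2,2)$. You would then still have to exclude graphs with pattern $(3,3,3,2,1)$ on both sides, which needs essentially the same argument as the direct one.)

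The missing idea is that this case is simply impossible, and that is how the paper disposes of it. Three $3$-element neighborhoods in a $5$-element column set with pairwise intersections of size at most one would have a union of size at least $9-3=6$, by inclusion--exclusion. Your own normalization shows the same thing once carried through. With the first two triples normalized to $\{0,1,2\}$ and $\{0,3,4\}$, a third triple containing $0$ could use none of $1,2,3,4$. A third triple avoiding $0$ has three elements in $\{1,2\}\cup\{3,4\}$, hence two in one block, so it meets one of the first two triples in two columns. In particular there is no admissible $x$ in $\{1,3,x\}$: the third triple is not ``forced up to symmetry'', it does not exist. With this one-line exclusion replacing the transposition step, your argument is complete and follows the paper's. (A minor slip: $(3,3,2,2,2)$ gives $\sum\binom{d_i}{2}=9$, not $8$, which is harmless.)
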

\begin{proof}
Let the row degrees be $d_0,\ldots,d_4$. Since the graph is $C_4$-free, the neighborhoods of any two rows intersect in at most one column, giving the column-pair count
\[
\sum_{i=0}^{4}\binom{d_i}{2}\le\binom52=10.
\]
With thirteen edges the left-hand side is at least 11, so there are at most twelve edges. When the total degree is twelve, the only nonincreasing degree patterns satisfying the bound are
\[
(3,3,3,2,1),\qquad (4,2,2,2,2),\qquad (3,3,2,2,2).
\]
The first pattern is impossible: three 3-element neighborhoods with pairwise intersections of size at most one have union size at least $9-3=6$, which cannot fit into five columns.

For the degree pattern $(4,2,2,2,2)$, label the degree-four row as row~0; its neighborhood is $\{1,2,3,4\}$. Every degree-two row must contain column~0, otherwise it would choose two columns from $\{1,2,3,4\}$ and form a $C_4$ with row~0. The second neighbors of the four degree-two rows must then run through $\{1,2,3,4\}$ exactly once; otherwise a repeated choice again creates a $C_4$. Label these rows $1,2,3,4$ according to that second neighbor. This gives $\mathcal A_5$, uniquely up to row and column permutations.

For the degree pattern $(3,3,2,2,2)$, the two 3-element neighborhoods must intersect in exactly one column: if disjoint, their union has six columns; if they intersect in two columns, they form a $C_4$. Label these rows $0,1$, call the common column 0, and set
\[
N(0)=\{0,1,2\},\qquad N(1)=\{0,3,4\}.
\]
Each of the remaining three rows has degree two, with one neighbor in $\{1,2\}$ and one in $\{3,4\}$; otherwise it forms a $C_4$ with row~0 or row~1. Thus the three rows correspond to three distinct cross edges of the complete bipartite graph $K_{2,2}$ between $\{1,2\}$ and $\{3,4\}$, so exactly one cross edge is omitted. The four choices of the omitted edge are equivalent under swapping columns $1,2$, swapping columns $3,4$, and simultaneously interchanging the two column blocks together with rows $0,1$. Hence this yields a single isomorphism class $\mathcal B_5$. Both displayed skeletons are $C_4$-free and have twelve edges.
\end{proof}

The value $\zRL(5,5)=17$ is known \cite{LQ}, so $z_2(5,5)\ge17$, while \eqref{eq:universal} gives
\[
z_2(5,5)\le\left\lfloor\frac{25+12}{2}\right\rfloor=18.
\]
Thus only irreducible 18-square configurations need to be excluded. Such a configuration has twelve one-edges, six two-edges, and one hole among the thirteen free cells.

\subsection{Finite exclusion of the 18-square candidates}

Each ordinary skeleton has $\binom{13}{2}=78$ single two-edge candidates. Before pruning, the number of ways to select six support-disjoint two-edges is
\begin{equation}\label{eq:55raw}
M(13,6)=\frac{13!}{1!\,2^6\,6!}=270\,270
\end{equation}
labeled families.

\begin{lemma}\label{lem:55finite}
Apply Algorithm~\ref{alg:finite} to $\mathcal A_5$ and $\mathcal B_5$ with target length 18, so $k=6$. There is no irreducible 18-square configuration on $\mathcal B_5$; on $\mathcal A_5$, every six-pair family except the following two is excluded by strip overload, a product relation, or an instance of \eqref{eq:S}--\eqref{eq:E}:
\begin{align}
\mathcal F_+={}&\{u_{12}+u_{34},u_{13}+u_{42},u_{14}+u_{23},\notag\\
&\hspace{12mm}u_{21}+u_{43},u_{24}+u_{31},u_{32}+u_{41}\},\label{eq:Fplus}\\
\mathcal F_-={}&\{u_{12}+u_{43},u_{13}+u_{24},u_{14}+u_{32},\notag\\
&\hspace{12mm}u_{21}+u_{34},u_{23}+u_{41},u_{31}+u_{42}\}.\label{eq:Fminus}
\end{align}
\end{lemma}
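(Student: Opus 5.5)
We follow the template of Theorems~\ref{thm:64class} and~\ref{thm:74classes}: run Algorithm~\ref{alg:finite} separately on the two skeletons of Lemma~\ref{lem:55base}, but keep every exclusion local. By Lemma~\ref{lem:delete}, any exclusion certified on a sub-sum (one candidate plus $E_1$, or two or three candidates plus $E_1$) removes every family containing it. So the proof is a hand-checkable pruning of the 78 single candidates, then of candidate pairs, then a short clique argument.

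\textbf{Skeleton $\mathcal A_5$.} The free cells are $u_{00}$ and the twelve off-diagonal cells $u_{ij}$ with $i\neq j$ in $\{1,2,3,4\}$. The symmetry group is $S_4$, acting simultaneously on rows and columns, and this organizes the case analysis.
\begin{enumerate}
\item Any candidate containing $u_{00}$ gives a five-square overload on a $2\times3$ strip (rows $0,i$), exactly as in the $4\times4$ proof.
\item A row- or column-degenerate pair $u_{ij}+u_{ik}$ gives an overload on rows $\{0,i\}$ and columns $\{i,j,k\}$.
\item A reversed pair $u_{ij}+u_{ji}$ is excluded by the strip on rows $\{i,j\}$ and columns $\{0,i,j\}$.
\item A ``path'' pair $u_{ij}+u_{jk}$ or $u_{ij}+u_{ki}$, with three distinct indices, should be excluded by a product relation. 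I would try the analogue of \eqref{eq:44rel}, using the rectangles through row $0$ and column $0$.
\end{enumerate}
If this pattern holds, only the ``matching'' pairs $u_{ij}+u_{kl}$ with $\{i,j,k,l\}=\{1,2,3,4\}$ survive. There are twelve of them. Each $\mathcal F_\pm$ is a perfect matching of these twelve cells into six such pairs, so a six-family must use only matching pairs, pairwise disjoint.

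Next I would classify the perfect matchings of the twelve cells by such pairs. Each cell $u_{ij}$ has exactly two admissible partners, $u_{kl}$ and $u_{lk}$, so the admissible pairs form a union of cycles on twelve vertices. The perfect matchings of this graph are then few and can be enumerated directly. Pairwise exclusions should then remove every matching except the two sign-coherent ones, $\mathcal F_+$ (all ``$+$''-oriented) and $\mathcal F_-$. For these pairwise exclusions I would look for \eqref{eq:S}- and \eqref{eq:E}-instances, or for product relations of the type \eqref{eq:64N}, involving two matching pairs that share an index pattern.

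\textbf{Skeleton $\mathcal B_5$.} I would do the same kind of analysis, but with less symmetry. The automorphism group has order $8$, generated by the column swaps and block interchange described in the proof of Lemma~\ref{lem:55base}. The steps are the same: strip overloads on the degree-three rows $0,1$ and on pairs of degree-two rows first, then product relations for the survivors. After that, either the compatibility graph has no 6-clique, or each remaining clique contains a larger strip overload or a three-candidate identity.

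The hole count helps here. With only one hole, and with rows $0,1$ each containing two free cells, strip bounds on rows $\{0,1\}$ together with the column-pair structure should already be very restrictive. So I expect $\mathcal B_5$ to die at the clique stage.

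\textbf{Main obstacle.} The hardest step is the pairwise exclusions on $\mathcal A_5$ that eliminate the mixed-orientation matchings while provably leaving $\mathcal F_\pm$ alive at low order. The lemma asserts these two survive all local checks, so the exclusions must use a genuinely two-pair identity rather than single-candidate arguments. I would first search for product relations among the sixteen forms $u_{0j},u_{j0},u_{jj}$ and the two pairs, using the $S_4$-symmetry to reduce to a few orbit types of pairs of matching pairs. Failing that, I would fall back on the compatibility-graph enumeration of Algorithm~\ref{alg:finite}, with orbit representatives recorded in Supplementary Data~$\mathrm{S1}$.
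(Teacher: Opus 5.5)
\textbf{There is a genuine gap at your step~4, and the rest of the argument depends on it.} A path pair $u_{ij}+u_{jk}$ cannot be excluded as a single candidate, because it is irreducible.

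Take $u_{12}+u_{23}$. The restriction of $\mathcal A_5$ to rows and columns $\{0,1,2,3\}$ is exactly the extremal configuration of Theorem~\ref{thm:44}, and its $(RW3^+)$ closure carries over verbatim, since every rule is local to one line or rectangle. The three extra one-edges $u_{04},u_{40},u_{44}$ are certified orthogonal to every other displayed form, by the line rule and by rectangle transfer through the unoccupied cells $u_{00}$, $u_{j4}$, $u_{4j}$. So the single-candidate configuration is $(RW3^+)$-admissible.

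Since $\Aut(\mathcal A_5)\cong S_4$ acts transitively on the 24 path pairs, all of them survive the single-candidate stage. This is why the paper retains $36=24+12$ candidates, not 12; your steps 1--3 remove exactly $12+24+6=42$ of the 78 candidates. The relation \eqref{eq:44rel} you want to imitate involves \emph{two} two-edges, $u_{12}+u_{23}$ and $u_{32}+u_{21}$. It is a pair exclusion between two path pairs, not a single-candidate one.

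Consequently, the reduction of a six-family to a perfect matching by the twelve matching pairs is unjustified, and the real content of the lemma is missing. The hole is forced to be $u_{00}$, so you must rule out every perfect matching of the twelve off-diagonal cells that uses path pairs. The paper does this on the compatibility graph over all 36 candidates:
\begin{enumerate}
\item 24 pair exclusions by strip overload. For example, $u_{ij}+u_{jk}$ together with $u_{ji}+u_{ik}$ puts six squares on rows $\{i,j\}$ and columns $\{0,i,j,k\}$ against a bound of five; the transposed version gives the rest.
\item 72 pair exclusions by product relations. These include \eqref{eq:44rel}-type relations between path pairs, and relations between matching pairs of opposite orientation such as $u_{01}(u_{12}+u_{34})-u_{04}(u_{31}+u_{42})-u_{02}u_{11}+u_{02}u_{44}=0$.
\item Enumeration of the resulting 14 six-cliques, of which 12 are removed only by strip overloads involving three or more candidates.
\end{enumerate}
Your plan for killing the mixed-orientation matchings pairwise is on the right track, as the relation above shows. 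But the families containing path pairs are never addressed.

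For $\mathcal B_5$ your text is only an expectation that the clique stage kills everything. That is precisely what has to be established; the paper does it by exhibiting a 364-edge compatibility graph on 37 retained candidates with no 6-clique. Also, the order-8 group you cite acts on the four choices of omitted cross edge. The automorphism group of $\mathcal B_5$ itself has order~2: it is the identity and the map swapping rows $0\leftrightarrow1$ and columns $1\leftrightarrow3$, $2\leftrightarrow4$.
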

\begin{proof}
The finite exclusion has the following size.
\begin{center}\small
\begin{tabular}{@{}lrr@{}}
\toprule
Quantity & $\mathcal A_5$ & $\mathcal B_5$\\
\midrule
All single two-edge candidates & 78 & 78\\
Retained after local exclusions & 36 & 37\\
Incompatible pairs with overlapping support & 180 & 180\\
Candidate pairs excluded by strip bounds & 24 & 50\\
Candidate pairs excluded by product relations & 72 & 34\\
Candidate pairs excluded by \eqref{eq:S} & 0 & 30\\
Candidate pairs excluded by \eqref{eq:E} & 0 & 8\\
Edges in the necessary compatibility graph & 354 & 364\\
Number of 6-cliques & 14 & 0\\
6-cliques further excluded by strip bounds & 12 & 0\\
Families not excluded by local methods & 2 & 0\\
\bottomrule
\end{tabular}
\end{center}
The necessary compatibility graph for $\mathcal B_5$ has no 6-clique, so that skeleton is completely excluded. Of the 14 six-cliques for $\mathcal A_5$, 12 are eliminated by larger strip overloads. The remaining two are precisely $\mathcal F_+$ and $\mathcal F_-$. We do not claim that an edge of the compatibility graph represents an irreducible pair without an $(RW3^+)$ assumption; only the implication ``excluded means genuinely reducible is used. The two remaining families are shown reducible in the next subsection because they define the same polynomial $T$, which has a nine-square representation; see Theorem~\ref{thm:T9}.
\end{proof}

\subsection{The two exceptions define the same polynomial}

Define
\begin{align}\label{eq:T}
T={}&\sum_{i=1}^{4}u_{ii}^2
+(u_{12}+u_{34})^2+(u_{13}+u_{42})^2+(u_{14}+u_{23})^2\notag\\
&+(u_{21}+u_{43})^2+(u_{24}+u_{31})^2+(u_{32}+u_{41})^2.
\end{align}
The six forms in $\mathcal F_-$ together with the four diagonal squares expand to the same polynomial $T$, because for four distinct indices
\[
u_{ij}u_{k\ell}=u_{i\ell}u_{kj}.
\]
Equivalently,
\begin{align}\label{eq:Texp}
T={}&\left(\sum_{i=1}^{4}x_i^2\right)\left(\sum_{j=1}^{4}y_j^2\right)\notag\\
&+2\bigl(x_1x_2y_3y_4+x_1x_3y_2y_4+x_1x_4y_2y_3\notag\\
&\hspace{11mm}+x_2x_3y_1y_4+x_2x_4y_1y_3+x_3x_4y_1y_2\bigr).
\end{align}

\subsection{An explicit nine-square representation}

\begin{theorem}\label{thm:T9}
The polynomial $T$ can be represented as a sum of nine squares of real bilinear forms. Hence the ten-square displayed decomposition in \eqref{eq:T} is reducible.
\end{theorem}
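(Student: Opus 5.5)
The plan is to prove Theorem~\ref{thm:T9} by a low-rank Gram construction (proof type~5 of Section~\ref{subsec:certs}), using the symmetry of $T$ to cut the search down to a few parameters. From \eqref{eq:Texp}, $T$ is invariant under simultaneous permutations of the indices $1,\dots,4$ acting on both $x$ and $y$, and under the swap $x\leftrightarrow y$. The group $G=S_4\times\mathbb Z_2$ acts on the 16-dimensional space spanned by the monomials $u_{ij}$, $1\le i,j\le4$. I will look only for Gram matrices $Q$ in the commutant of this action.

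First I describe the affine space of Gram matrices. Since $u_{ij}u_{k\ell}=u_{i\ell}u_{kj}$, each cross term $x_ax_by_cy_d$ can be split between the two products representing it. Each ordinary rectangle relation gives one free direction: a symmetric matrix $B$ with $u^{\mathsf T}Bu\equiv0$. The relations are of two kinds: those involving diagonal cells $u_{ii}$ (the skeleton is $\mathcal A_5$ restricted to rows and columns $1,\dots,4$), and those among off-diagonal cells. The quadratic form $T$ contains the diagonal-cell products $u_{ii}u_{jj}$ only through $x_ix_jy_iy_j$, and this coefficient is zero in \eqref{eq:Texp}. So a diagonal-cell product may appear in $Q$ only if it is balanced by an opposite off-diagonal product $u_{ij}u_{ji}$.

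Averaging over $G$, I obtain a small family $Q(s_1,\dots,s_d)$ with $u^{\mathsf T}Q(s)u\equiv T$. I expect $d$ to be three or four: roughly one parameter for $u_{ii}u_{jj}$ versus $u_{ij}u_{ji}$, and one or two for the splitting within each complementary pair. I then decompose the 16-dimensional space into $S_4$-isotypic components. The diagonal cells form the permutation module $\mathbf 1\oplus V_3$. The twelve off-diagonal cells form the module induced from ordered pairs, which decomposes as $\mathbf 1\oplus 2V_3\oplus V_2\oplus V_3'$ (dimensions $1+6+2+3=12$), further refined by the $\mathbb Z_2$ swap. In this basis $Q(s)$ becomes block diagonal with blocks of size at most $3\times3$, indexed by the irreducible types. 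Positive semidefiniteness and rank can then be read off from a few small matrices with entries affine in $s$.

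The target is total rank $9$ with every block PSD. A plausible pattern is to keep the trivial part at rank $\le1$, make one $V_3$-type block rank $2$ in multiplicity (contributing $6$), and make the $V_2$ block vanish, or some similar combination adding to $9$. Concretely, I will set the determinants of the appropriate blocks to zero, solve the resulting low-degree polynomial system in $s$, and pick a solution with all remaining principal minors nonnegative. I expect the solution to lie in $\mathbb Q(\sqrt2)$, consistent with the $\sqrt2$ in \eqref{eq:44compress}. Once a point $s^*$ is found, I factor $Q(s^*)=V^{\mathsf T}V$ and write the nine bilinear forms $Vu$ explicitly. Exact expansion then checks $\sum g_t^2=T$, and Lemma~\ref{lem:qgram} certifies that $Q(s^*)\succeq0$ and $\rank Q(s^*)=9$ by exact elimination over $\mathbb Q(\sqrt2)$. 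Reducibility of the ten-square decomposition \eqref{eq:T} follows, and by the remark before the theorem the same conclusion holds for both $\mathcal F_+$ and $\mathcal F_-$.

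The main obstacle is the middle step: finding parameters for which the blocks simultaneously drop rank and stay PSD. A naive choice inside the span of the ten displayed forms cannot work, since the paper notes the nine forms lie outside that span. So the rank drop must come from the free rectangle directions, in particular from those mixing diagonal cells with off-diagonal cells. If the invariant ansatz has no rank-$9$ PSD point, I would relax to a subgroup, for instance the Klein four-group together with the $x\leftrightarrow y$ swap. I would then find a numerical rank-$9$ solution by SDP with trace or log-det heuristics, round it to $\mathbb Q(\sqrt2)$, and, failing an exact closed form, certify existence via the rational contraction argument of Lemma~\ref{lem:contraction}.
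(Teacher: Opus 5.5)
There is a genuine gap. The whole content of Theorem~\ref{thm:T9} is that a positive semidefinite Gram matrix of $T$ with rank at most $9$ exists, and your proposal never produces one. Worse, your main ansatz provably cannot produce it.

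Every Gram matrix of $T$ has unit diagonal and zero entries between cells in a common row or column. Under $S_4$-invariance, the two diagonals of each all-distinct rectangle lie in one orbit: apply $(2\,4)$ to $\{u_{12},u_{34}\}$ to get $\{u_{14},u_{32}\}$. Both entries are therefore frozen at $1/2$, and your splitting parameter disappears. Only two parameters survive: $\alpha=Q_{(ii),(jj)}=-Q_{(ij),(ji)}$, and $\beta=Q_{(ii),(jk)}$ with $i,j,k$ distinct, which forces $Q_{(ij),(jk)}=-\beta$. The isotypic blocks are
\[
(\mathbf{1},+):\ \begin{pmatrix}1+3\alpha&2\sqrt3\,\beta\\2\sqrt3\,\beta&2-\alpha-4\beta\end{pmatrix},\qquad
(V_3,+):\ \begin{pmatrix}1-\alpha&2\beta\\2\beta&-\alpha\end{pmatrix},
\]
together with the scalars $2-\alpha+2\beta$ on $V_2$ and $1+\alpha\pm2\beta$ on the antisymmetric $V_3$ and $V_3'$. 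Positive semidefiniteness forces $-1/3\le\alpha\le0$ and $|\beta|\le1/3$. On that region all three scalars are strictly positive: the corners $\alpha=-1/3$, $\beta=\mp1/3$ are excluded by the trivial block. Neither $2\times2$ block can vanish either. Hence every invariant PSD Gram matrix has rank at least $1+3+2+3+3=12$. Two side remarks: the trace heuristic is useless here, since $\tr Q=16$ on the entire feasible set, and nothing supports a solution over $\mathbb Q(\sqrt2)$.

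The missing ingredient is the actual construction, and it must break the symmetry. In the paper, the Gram matrix of the nine forms is block diagonal with respect to the four diagonal cells and the three quadruples $\{u_{ij},u_{ji},u_{kl},u_{lk}\}$ indexed by the perfect matchings $\{ij\,|\,kl\}$. There is no diagonal/off-diagonal coupling.
\begin{itemize}
\item The diagonal block has entries $-q$ on pairs inside $\{1,2,4\}$ and $v$ on pairs containing $3$. It has rank $3$ exactly when $1-2q=3v^2$. The compensating entries $q$ and $-v$ sit at $u_{ij}u_{ji}$.
\item In the coordinates $u_{ij}\pm u_{ji}$, listing the pair that avoids $3$ first, each quadruple block becomes
\[
\begin{pmatrix}1+q&1\\1&1-v\end{pmatrix}\oplus\begin{pmatrix}1-q&\pm\rho\\\pm\rho&1+v\end{pmatrix}.
\]
It has rank $2$ exactly when $(1+q)(1-v)=1$ and $\rho^2=(1-q)(1+v)$. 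This splits every all-distinct rectangle unevenly as $t+(1-t)$.
\item Eliminating $v$ gives the irreducible cubic $2q^3+6q^2-1=0$, and the total rank is $3+2+2+2=9$.
\end{itemize}
Within $S_4\times\mathbb Z_2$, this solution is invariant only under the $3$-cycle on $\{1,2,4\}$ and $x\leftrightarrow y$. So your proposed Klein-four relaxation is not a symmetry of the known solution. Its entries do not lie in $\mathbb Q(\sqrt2)$, so Lemma~\ref{lem:qgram} cannot certify it. You would instead need the paper's direct symbolic verification via \eqref{eq:qrel}, or possibly Lemma~\ref{lem:contraction}.
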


\begin{proof}
Let $q$ be the root of the cubic equation
\begin{equation}\label{eq:q}
2q^3+6q^2-1=0
\end{equation}
that lies uniquely in $(1/3,1/2)$, and set
\[
v=\frac{q}{1+q},\qquad \rho=\sqrt{(1-q)(1+v)},\qquad t=\frac{1+\rho}{2}.
\]
Then
\begin{equation}\label{eq:qrel}
(1+q)(1-v)=1,\qquad 1-2q=3v^2,
\qquad \rho^2=(1-q)(1+v).
\end{equation}
For bilinear forms $a,b,c,d$, define
\begin{align}
\mathcal B_+(a,b,c,d)={}&\frac12\left[\sqrt{1+q}(a+b)+\sqrt{1-v}(c+d)\right]^2\notag\\
&+\frac12\left[\sqrt{1-q}(a-b)+\sqrt{1+v}(c-d)\right]^2,\label{eq:Bplus}\\
\mathcal B_-(a,b,c,d)={}&\frac12\left[\sqrt{1-v}(a+b)+\sqrt{1+q}(c+d)\right]^2\notag\\
&+\frac12\left[\sqrt{1+v}(a-b)-\sqrt{1-q}(c-d)\right]^2.\label{eq:Bminus}
\end{align}
Taking $a=u_{11}$, $b=u_{22}$, $c=u_{33}$, and $d=u_{44}$, we have
\begin{align}\label{eq:T9}
T={}&\frac{1+q}{2}(a-b)^2+
\frac{1+q}{6}(a+b-2d)^2+
\bigl[c+v(a+b+d)\bigr]^2\notag\\
&+\mathcal B_+(u_{12},u_{21},u_{34},u_{43})\notag\\
&+\mathcal B_-(u_{13},u_{31},u_{24},u_{42})\notag\\
&+\mathcal B_+(u_{14},u_{41},u_{23},u_{32}).
\end{align}
The first line on the right contains three squares, and each of the following three terms contains two, for a total of nine squares. Expanding \eqref{eq:Bplus}--\eqref{eq:Bminus} and using \eqref{eq:qrel} shows that all diagonal terms agree with \eqref{eq:Texp}. Products of reversed cells satisfy $u_{ij}u_{ji}=u_{ii}u_{jj}$, and the rectangle products involving four distinct indices occur in pairs whose weights sum to $t+(1-t)=1$. Hence all cross terms also agree with \eqref{eq:Texp}, proving \eqref{eq:T9}.
\end{proof}

\begin{proposition}\label{prop:Trank}
Let $f_1,\ldots,f_{10}$ be the ten displayed forms in $T$, with the four diagonal forms $u_{11},u_{22},u_{33},u_{44}$ and the six two-edge forms from $\mathcal F_+$. Then the 45 mixed products $f_af_b$ for $1\le a<b\le10$ are linearly independent in the space of biquadratic monomials. In particular, there is no nonzero product relation $\sum_{a<b}c_{ab}f_af_b=0$ inside the span of the original displayed forms, so Lemma~\ref{lem:product} cannot produce a rank reduction there.
\end{proposition}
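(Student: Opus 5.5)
The plan is to reduce the claim to a small exact rank computation. First the $45$ products get split by type, and most coefficients are killed by ``private'' monomials. Write $d_i=u_{ii}$ for $1\le i\le4$, and let $g_1,\dots,g_6$ be the six two-edge forms of $\mathcal F_+$. Each $g$ has the form $u_{ab}+u_{cd}$ with $\{a,b,c,d\}=\{1,2,3,4\}$ and $a\ne b$. The products fall into three groups: the $6$ products $d_id_j$, the $24$ products $d_ig_s$, and the $15$ products $g_sg_t$. I expand each one in the monomial basis $x_ax_by_cy_d$. Throughout I use that $u_{ab}u_{cd}$ depends only on the multisets $\{a,c\}$ and $\{b,d\}$, so that, for instance, $u_{ij}u_{ji}=u_{ii}u_{jj}$.

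\textbf{Step 1: the products $d_ig_s$.} Every off-diagonal cell $u_{ij}$ lies in exactly one $g_s$. The product $d_iu_{ij}$ gives the monomial $x_i^2y_iy_j$, and the product $d_ju_{ij}$ gives $x_ix_jy_j^2$. I claim that a monomial of the form $x_i^2y_iy_j$ with $i\ne j$ arises only from $u_{ii}u_{ij}$. The reason is that it needs two cells in row $i$, one of them in column $i$, and among the displayed forms the cell $(i,i)$ occurs only in $d_i$. The same argument applies to $x_ix_jy_j^2$ by transposition.

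Now fix $d_i$ and a two-edge $g=u_{ab}+u_{cd}$. Since $\{a,b,c,d\}=\{1,2,3,4\}$, the index $i$ equals exactly one of $a,c$ and exactly one of $b,d$. So $d_ig$ has a row-sharing term $x_i^2y_iy_\ast$ and a column-sharing term $x_\ast x_iy_i^2$, and each of these monomials appears in no other product. Comparing coefficients of these monomials in a relation $\sum c_{ab}f_af_b=0$ forces all $24$ coefficients of the products $d_ig_s$ to vanish.

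\textbf{Step 2: the remaining $21$ products.} This leaves the $6+15$ products $d_id_j$ and $g_sg_t$.
\begin{itemize}
\item Each $d_id_j$ is the single monomial $x_ix_jy_iy_j$.
\item Each $g_sg_t$ expands to four monomials $u\cdot u'$, with the cells $u$ from $g_s$ and $u'$ from $g_t$. These monomials have three possible shapes: a square $x_a^2$ (when the two cells share a row), a square $y_b^2$ (when they share a column), or a fully mixed monomial.
\end{itemize}
I would tabulate the $15$ edge--edge products, using the symmetry of $\mathcal F_+$. Simultaneous permutations of the indices that preserve $\mathcal F_+$ act on the configuration, together with transposition $u_{ij}\mapsto u_{ji}$, which maps $\mathcal F_+$ to itself. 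This symmetry should reduce the $15$ pairs to two or three orbits.

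For pairs whose product contains a monomial $x_a^2y_by_d$ with $b,d\ne a$, or its transpose, such a monomial involves two cells of row $a$ avoiding column $a$. It can therefore arise only from that specific pair, because each cell lies in a unique $g$. This pins down those coefficients as zero. Whatever pairs survive, and the $d_id_j$, then involve only fully mixed monomials $x_ax_by_cy_d$ and the monomials $x_ix_jy_iy_j$. There the identities $u_{ij}u_{ji}=d_id_j$ and the rectangle coincidences $u_{ij}u_{k\ell}=u_{i\ell}u_{kj}$ are exactly where a dependency could hide.

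\textbf{Step 3: the obstacle.} The main obstacle is this residual block, a linear system of at most $21$ unknowns in a few dozen monomials. I would first try to show that it is block-triangular under a suitable ordering of the monomials $x_ax_by_cy_d$ by their index pattern. If that fails, I would simply write out its exact integer coefficient matrix, which is small enough to print, and verify full column rank by exact elimination, in the same spirit as the Supplementary Data checks.

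Linear independence of all $45$ products then follows. The final sentence of the proposition is immediate: a nonzero relation $\sum_{a<b}c_{ab}f_af_b=0$ is precisely a linear dependence among these products.
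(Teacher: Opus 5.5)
Your route is correct and genuinely different from the paper's. The paper argues by brute force. It writes the $84\times45$ integer coefficient matrix of the mixed products and certifies rank $45$ by Gaussian elimination modulo $2$: a $45\times45$ minor that is nonzero mod $2$ is a nonzero integer. You instead peel off coefficients using monomials that occur in only one mixed product. That gives a hand-checkable proof and explains \emph{why} no product relation exists.

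There is one slip in Step~1. Since $a,b,c,d$ are pairwise distinct, $i$ coincides with exactly one of $a,b,c,d$. So $d_ig_s$ contains exactly one line-sharing monomial, either $x_i^2y_iy_b$ or $x_ax_iy_i^2$, not one of each. One private monomial per product is all you need, so the conclusion survives.

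More importantly, the residual block you flag as the main obstacle is trivial, and no elimination is needed.
\begin{itemize}
\item Every two-edge of $\mathcal F_+$ has column set equal to the complement of its row set. The six row sets are exactly the six $2$-subsets of $\{1,2,3,4\}$.
\item Hence two distinct two-edges share neither a row nor a column only when they are transposes of each other. These are the three products $(u_{12}+u_{34})(u_{21}+u_{43})$, $(u_{13}+u_{42})(u_{31}+u_{24})$, $(u_{14}+u_{23})(u_{41}+u_{32})$. The other twelve edge--edge products share a row and are killed by your Step~2.
\item Each transpose product contains a fully mixed monomial such as $x_1x_4y_2y_3=u_{12}u_{43}$. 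Its only other realization, $u_{13}u_{42}$, lies inside the single two-edge $u_{13}+u_{42}$. It therefore contributes only to a square $f_a^2$, never to a mixed product.
\end{itemize}
In general, read a two-edge $\{(a,b),(c,d)\}$ as the permutation $abcd$ in one-line notation; this is well defined, since swapping the two cells is an even reordering. Then $\mathcal F_+$ consists exactly of the even ones. The two cell pairs realizing $x_ix_ky_jy_\ell$, namely $\{(i,j),(k,\ell)\}$ and $\{(i,\ell),(k,j)\}$, have opposite parity. So the rectangle coincidences you worried about always pair a mixed product with a square.

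Thus those three coefficients vanish. After that, the six products $d_id_j=x_ix_jy_iy_j$ are distinct monomials, so their coefficients vanish too. The residual system has $9$ unknowns, not $21$, and it is triangular. Completed this way, your argument replaces the paper's computer rank check with a short exact proof.
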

\begin{proof}
Expand each product $f_af_b$ as a sum of products of cell monomials. Encode the biquadratic monomial arising from a cell pair $\{(i,j),(k,\ell)\}$ by the ordered pair $(\{i,k\},\{j,\ell\})$. The 45 mixed products then form an integer matrix $M$ whose rows are indexed by these encodings and whose columns are indexed by unordered pairs $\{a,b\}$. Direct counting gives an $84\times45$ matrix. Gaussian elimination modulo each of $p=2,3,5,7,11,13,17,19$ gives $\rank_{\mathbb F_p}M=45$. In fact, any one of these primes is enough for a rigorous proof: modulo 2 the rank is 45, so some $45\times45$ minor is nonzero modulo 2; its integer determinant is therefore nonzero. Hence $M$ has column rank 45 over both $\mathbb Q$ and $\mathbb R$. The other primes serve only as independent cross-checks. Thus no nonzero real linear combination of the mixed products vanishes.

Because the supports of the ten displayed forms are pairwise disjoint, each $f_a^2$ contains a pure cell square that can arise only from that same diagonal Gram entry. Hence any coefficient-preserving Gram perturbation $H$ must satisfy $H_{aa}=0$. The remaining degrees of freedom are exactly the 45 mixed-product relations above. Their linear independence therefore shows that the Gram kernel within the span of the original displayed forms is zero.
\end{proof}

\begin{theorem}\label{thm:55}
\[
z_2(5,5)=\zSL(5,5)=\zRL(5,5)=17.
\]
\end{theorem}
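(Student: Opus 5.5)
The plan is to combine the lower bound already available with the finite exclusion of Lemma~\ref{lem:55finite} and the compression of Theorem~\ref{thm:T9}.

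\textbf{Lower bound and reduction to 18 squares.} The known value $\zRL(5,5)=17$ from \cite{LQ}, together with the hierarchy \eqref{eq:hierarchy}, gives $z_2(5,5)\ge\zSL(5,5)\ge\zRL(5,5)=17$. The universal cell bound \eqref{eq:universal} gives $z_2(5,5)\le18$. So it suffices to show that no simple limited $5\times5$ configuration with 18 displayed squares is irreducible. Such a configuration has twelve one-edges forming an ordinary extremal skeleton. By Lemma~\ref{lem:55base}, after row and column relabeling this skeleton is $\mathcal A_5$ or $\mathcal B_5$. It also has six support-disjoint two-edges on the free cells and one hole.

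\textbf{Finite exclusion.} I would invoke Lemma~\ref{lem:55finite}. On $\mathcal B_5$ the compatibility graph has no 6-clique. Every exclusion used to build that graph (support overlap, strip overload, product relation, or identities \eqref{eq:S}--\eqref{eq:E}) is a genuine reducibility proof. Lemma~\ref{lem:delete} lets a reducible subfamily, together with $E_1$, propagate reducibility to any family containing it. Hence every irreducible 6-family would have to form a 6-clique, so none exists on $\mathcal B_5$. On $\mathcal A_5$ the only surviving families are $\mathcal F_+$ and $\mathcal F_-$.

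\textbf{The two exceptions.} For $\mathcal F_\pm$ on $\mathcal A_5$, the displayed decomposition is $P_G=T+S$. Here $S$ is the sum of the eight one-edge squares $u_{0j}^2,u_{j0}^2$ for $1\le j\le4$. The part $T$ consists of the diagonal one-edges $u_{jj}$ and the six two-edges, and it is the same polynomial for both families by the rectangle identities. Theorem~\ref{thm:T9} writes $T$ as nine squares. Therefore $P_G$ is a sum of $9+8=17<18$ squares, and both families are reducible. This is a type-5 (explicit shorter SOS) argument: replacing a complete displayed sub-sum by a shorter representation shortens the whole sum. Consequently $z_2(5,5)\le17$, and equality follows for all three parameters.

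\textbf{Main obstacle.} The step I expect to be hardest is verifying the nine-square identity \eqref{eq:T9} exactly. This means checking that the diagonal and cross coefficients match \eqref{eq:Texp} using the relations \eqref{eq:qrel} for the real root $q$ of \eqref{eq:q}. I would expand symbolically and reduce modulo $2q^3+6q^2-1$. Proposition~\ref{prop:Trank} explains why no cheaper product-relation argument is available there.
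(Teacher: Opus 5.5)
Your proposal is correct and follows the paper's proof step for step: the lower bound from $\zRL(5,5)=17$ and the hierarchy, the cell bound reducing the problem to 18 squares, Lemma~\ref{lem:55finite} leaving only $\mathcal F_\pm$ on $\mathcal A_5$, and replacing the ten-square sub-sum $T$ by the nine squares of Theorem~\ref{thm:T9} to reach 17. One labeling correction: the coefficients in \eqref{eq:T9} involve square roots of expressions in the cubic root $q$, so this is not the paper's integer-SOS proof type but simply the inheritance of Proposition~\ref{prop:localglobal} (equivalently, a rank-9 Gram matrix), which is all the argument needs.
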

\begin{proof}
The finite exclusion leaves only the two 18-square candidates $\mathcal F_+$ and $\mathcal F_-$. In both cases the full polynomial can be written as
\[
\sum_{j=1}^{4}(x_0y_j)^2+\sum_{i=1}^{4}(x_i y_0)^2+T.
\]
Keeping the first eight squares and replacing the ten-square displayed decomposition of $T$ by the nine-square representation from Theorem~\ref{thm:T9} gives a 17-square representation. Hence both 18-square candidates are reducible, so $z_2(5,5)\le17$. Since $\zRL(5,5)=17$ is known, the hierarchy \eqref{eq:hierarchy} gives equality.
\end{proof}

\subsection{A general method suggested by the $5\times5$ case}

The $5\times5$ proof provides a second proof pattern, complementary to the four-column chain:
\[
\begin{aligned}
\text{ordinary-skeleton classification}
&\longrightarrow \text{finite compatibility reduction}\\
&\longrightarrow \text{highly symmetric residual cases}\\
&\longrightarrow \text{low-rank Gram / explicit shorter SOS}.
\end{aligned}
\]
The most immediate general consequence is the following.

\begin{proposition}[Global inheritance of a local shorter SOS]\label{prop:localglobal}
Suppose a complete displayed sub-sum
\[
F=f_1^2+\cdots+f_s^2
\]
has a shorter SOS representation
\[
F=g_1^2+\cdots+g_r^2,
\qquad r<s,
\]
Then every larger displayed decomposition containing these complete displayed squares is reducible. If there are $q$ such sub-sums that share no displayed square and each saves at least one square, then the total SOS length can be reduced by at least $q$.
\end{proposition}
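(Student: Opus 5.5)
The plan is a direct substitution argument, essentially the same one used in Lemma~\ref{lem:delete}, only run forwards. Let the full displayed decomposition be $P=\sum_{a=1}^{R}f_a^2$, and let $S\subseteq\{1,\dots,R\}$ with $|S|=s$ index the complete displayed squares forming the sub-sum $F=\sum_{a\in S}f_a^2$. Then
\[
P=F+\sum_{a\notin S}f_a^2=\sum_{t=1}^{r}g_t^2+\sum_{a\notin S}f_a^2 .
\]
The right-hand side is a sum of $r+(R-s)<R$ squares of real bilinear forms, so $\SOS(P)\le R-(s-r)\le R-1$. Hence the displayed decomposition is reducible in the sense of the irreducibility definition. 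No compatibility between the $g_t$ and the remaining $f_a$ is needed, because the $g_t$ only have to reproduce the polynomial $F$, not any particular displayed forms.

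For the second assertion, take sub-sums $F_1,\dots,F_q$ with pairwise disjoint index sets $S_1,\dots,S_q\subseteq\{1,\dots,R\}$, where $|S_i|=s_i$. Suppose each $F_i$ has a representation by $r_i\le s_i-1$ squares. Because the index sets are disjoint, we can write
\[
P=\sum_{i=1}^{q}F_i+\sum_{a\notin\bigcup_i S_i}f_a^2 ,
\]
and replace each $F_i$ by its shorter representation independently. The total number of squares is then $R-\sum_i(s_i-r_i)\le R-q$. Simplicity of the configuration is never used, since the argument only manipulates polynomial identities.

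The one point needing care is the hypothesis ``share no displayed square.'' If two sub-sums shared a square $f_a^2$, then $\sum_i F_i$ would count $f_a^2$ twice, and the substitution would break. Disjointness of the index sets is exactly what makes $P$ split additively. I would state this explicitly and note that the sub-sums may still share cells or monomials, as long as they do not share whole displayed squares. This shows the main obstacle is purely bookkeeping, not mathematical.
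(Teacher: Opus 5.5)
Your proposal is correct and follows the paper's own argument: keep all other displayed squares unchanged and replace each complete sub-sum by its shorter representation. Because the sub-sums share no displayed square, the replacements can be made independently, which saves at least $q$ squares in total.
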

\begin{proof}
Keep all other squares unchanged and replace these sub-sums one at a time.
\end{proof}

Thus \eqref{eq:T} gives a local reducible pattern valid for every $m,n\ge4$: whenever a larger configuration contains this complete ten-square sub-sum on four distinct rows and four distinct columns, the entire displayed decomposition is reducible.

Proposition~\ref{prop:Trank} shows that the ten original displayed forms of $T$ admit no nonzero product relation capable of producing a rank reduction within their span, yet \eqref{eq:T9} gives a nine-square representation. Thus the absence of a Gram perturbation in the original displayed space is not, in general, a criterion for irreducibility: a shorter representation may use bilinear forms outside that span. This explains why, in the seven- and eight-row exclusions, linear algebra restricted to the original displayed space is insufficient; when necessary, one must search directly for low-rank positive semidefinite Gram representations in the full cell-monomial space.

\subsection{Further questions and conjectures}

\begin{conjecture}[L\"ofberg--Qi]\label{conj:z2zrl}
For all $m,n$,
\[
z_2(m,n)=\zRL(m,n).
\]
\end{conjecture}
The new exact values at $(4,4)$, $(7,4)$, $(8,4)$, and $(5,5)$ provide additional finite evidence for this conjecture, but do not imply the general statement.

\begin{problem}
Theorem~\ref{thm:T9} proves only $\SOS(T)\le9$. Is it true that
\[
\SOS(T)=9\,?
\]
\end{problem}

\begin{problem}
For fixed $n\ge4$, does there exist a finite set of local reducible patterns depending only on $n$ such that every irreducible limited configuration sufficiently close to the cell bound must avoid all of them?
\end{problem}

Previous work also raises an eventual-saturation question at fixed width: for every fixed $n\ge4$, does there exist a threshold $g_2(n)$ such that for all $m\ge g_2(n)$
\[
z_2(m,n)=\left\lfloor\frac{mn+z(m,n)}2\right\rfloor.
\]
For four columns this is known for $m\ge15$ \cite{CC}, whereas the three-column family does not attain the corresponding cell bound \cite{LQ,QCL}. The ten-to-nine-square compression in the $5\times5$ case suggests that, at higher widths, local Gram-rank reductions may influence the eventual behavior in addition to the combinatorics of the ordinary skeleton.

\section{Conclusion}

Starting from upper bounds for the second-order Zarankiewicz number, we studied the distinction between the parameters $\zRL$ and $\zSL$, defined through particular sufficient conditions, and the unrestricted irreducible extremum $z_2$. The guiding principle throughout is that positive cases may be certified by sufficient conditions such as $(RW3^+)$, whereas negative cases require explicit reducibility proofs; failure of a sufficient condition is never used as a counterargument.

For $4\times4$, we first determine the unique ordinary extremal skeleton and then use two-row/two-column strip bounds together with one product relation to show that an irreducible configuration contains at most one two-edge, giving $z_2(4,4)=10$. In larger four-column sizes, the ordinary extremal skeleton stabilizes from six rows onward. We classify all extremal irreducible $6\times4$ configurations with 16 squares and obtain a unique isomorphism class; deleting the new degree-one row then excludes 20-square candidates on seven rows and yields $z_2(7,4)=19$. To continue to eight rows, we completely classify the 19-square seven-row extremizers into three isomorphism classes. These three templates then constrain the 22-square eight-row candidates, leading to $z_2(8,4)=21$. This develops the structural method ``small exact value and extremal classification -- deletion restriction -- next-order exclusion and yields the general deletion inequality \eqref{eq:generaldelete}.

In the $5\times5$ case there are two ordinary extremal skeletons. Finite enumeration, together with local strip and product-relation exclusions, leaves only two highly symmetric families. They define the same ten-square polynomial $T$, which has an explicit nine-square representation, and therefore $z_2(5,5)=17$. This proof supplies a second method with broader potential: first reduce the candidate set combinatorially, then search for a low-rank positive semidefinite Gram representation of the highly symmetric residual cases. At the same time, every known local shorter SOS becomes automatically a reducible substructure in all larger dimensions.

These results do not settle the general conjecture $z_2=\zRL$, nor do they complete all small four-column extremal classifications. They indicate that further progress will likely require three levels simultaneously: extremal structure of the ordinary $C_4$-free skeleton, isomorphism classification of irreducible augmented configurations, and Gram-rank reductions that leave the original displayed space. For $9\times4$ and the other unresolved four-column parameters, a natural next step is to classify all 21-square extremal $8\times4$ configurations and combine that classification with the deletion inequalities and local shorter-SOS patterns.

\section*{Data and computational verification}

Every computer-assisted component of the paper is finite. Computation is used only to execute the explicit steps of Algorithm~\ref{alg:finite} and the independent verification in Algorithm~\ref{alg:verify}: enumerate pairings on a fixed finite grid, check local strip and product-relation reductions, build the necessary compatibility graph, enumerate cliques, compute automorphism orbits of the ordinary skeleton, and verify the polynomial identities or Gram matrix identities described in Section~\ref{subsec:certs}. The numerical tables in the text give the finite ranges that must be exhausted, the orbit counts after pruning, and the counts for each proof type. A candidate is declared reducible only when a strip bound, product relation, explicit shorter SOS, or strictly lower-rank positive semidefinite Gram representation is present. Failure of $(RW3^+)$ is never used as a negative proof.

Discovery and verification are kept separate. Numerical optimization may suggest a low-rank Gram matrix or rewrite coefficients, but the archived records contain only exact integers, rationals, or elements of $\mathbb Q(\sqrt2)$. The independent checker reconstructs candidate indices from all cell pairs, expands every local exclusion, independently enumerates all target cliques of the compatibility graph, and builds the orbits under the full skeleton automorphism group. It verifies the explicit reducibility proof or exact Gram representation for every negative orbit and replays the $(RW3^+)$ closure of Definition~\ref{def:rw3rules} for every positive orbit. The machine-readable certificates and the independent checker are supplied as Supplementary Data~$\mathrm{S1}$, containing at least the following independently verifiable items.
\begin{enumerate}[leftmargin=2em,itemsep=0.35em]
\item The 10-square $4\times4$ positive example, the unique 16-square extremal representative on $6\times4$, the three 19-square representatives on $7\times4$, and the 21-square lower-bound representative on $8\times4$, each with a complete replay of the $(RW3^+)$ closure under Definition~\ref{def:rw3rules}.
\item The $7\times4$ 19-square finite classification: after strip pruning, 994 labeled families form 170 automorphism orbits; 86 orbits are excluded directly by nonzero product relations in the original displayed space, while the remaining 84 consist of 44 explicit shorter integer SOS cases, 10 equal-length rewrites followed by product relations, 2 low-rank Gram matrices over $\mathbb Q(\sqrt2)$, 25 low-rank Gram matrices from rational contraction arguments, and 3 $(RW3^+)$-admissible positive orbits.
\item The $8\times4$ 22-square exclusion: for ordinary skeleton $\mathcal A_8$, 1202 orbits after strip pruning are excluded directly by product relations in the original displayed space, and the remaining 829 are excluded by 283 seven-row restrictions, 434 explicit shorter integer SOS representations, 71 equal-length rewrites followed by product relations, and 41 low-rank positive semidefinite Gram representations. For $\mathcal B_8$, the corresponding numbers are 4607 direct product-relation orbits and 3758 remaining orbits, split as 1365 seven-row restrictions, 1880 shorter integer SOS representations, 265 equal-length rewrites followed by product relations, and 248 low-rank positive semidefinite Gram representations.
\item The $5\times5$ finite exclusion record: the 78 single two-edge candidates for each ordinary skeleton, all local two-candidate exclusions, the necessary compatibility graphs and all of their 6-cliques, together with the two final 18-square exceptional families $\mathcal F_+$ and $\mathcal F_-$, the ten-square displayed decomposition of $T$, and the coefficients of the nine-square representation \eqref{eq:T9}.
\item The independent checker: reconstructs candidate indices from cell pairs, replays local exclusions, compatibility graphs, target cliques, and automorphism orbits, and verifies each reducibility proof or $(RW3^+)$ closure above.
\end{enumerate}

\end{document}